\documentclass{article}
\usepackage{blindtext}
\usepackage[a4paper, total={6in, 8in}]{geometry}

\usepackage{graphicx} 
\usepackage{wrapfig}
\usepackage{amsfonts} 
\usepackage{authblk}
\usepackage{tikz}
\usepackage{hyperref}
\usepackage{orcidlink}
\usepackage{tikz-cd}
\usepackage{listings}
\usepackage{float} 
\usepackage{booktabs} 

\usepackage{amsmath}
\usepackage{multirow}
\usepackage{amsthm}

\usepackage{subcaption} 

\usepackage[all,cmtip]{xy}
\usepackage{blindtext}
\usepackage{titlesec}
\title{Sections and Chapters}

\usepackage[english]{babel}
\newtheorem{theorem}{Theorem}[section]
\newtheorem{prop}{Proposition}[section]
\newtheorem{corollary}{Corollary}[theorem]
\newtheorem{lemma}[theorem]{Lemma}
\newtheorem{definition}{Definition}
\newtheorem{example}{Example}[section]

\newcommand{\C}{\mathbb{C}}
\newcommand{\K}{\mathbb{K}}
\newcommand{\N}{\mathbb{N}}
\newcommand{\mycomment}[1]{}
\begin{document}
\title{Mayer Path Homology}
\author{
\large
Dilan Karaguler\,\orcidlink{0000-0002-4632-337X}$^{1}$\footnote{Corresponding author: \href{mailto:karagule@msu.edu}{karagule@msu.edu}} ,
Guo-Wei Wei\,\orcidlink{0000-0001-8132-5998}$^{1,2,3}$\\

\normalsize
$^1$Department of Mathematics, Michigan State University, MI 48824, USA\\
$^2$Department of Mathematics, University of Georgia, Athens, GA 30602, USA\\
$^3$Department of Biochemistry and Molecular Biology, University of Georgia, Athens, GA 30602, USA
}

\maketitle
\begin{abstract}
We introduce \emph{Mayer path homology}, a new homology theory for directed path complexes obtained by equipping path complexes with an $N$-nilpotent differential. The main novelty of this work is the introduction of an $N$-differential on path complexes, giving rise to $N$-chain complexes of $\partial$-invariant paths and Mayer path homology groups $H_n^{N,q}(P)$.
We prove that this construction defines a canonical invariant of directed graphs and is more sensitive than standard path homology, distinguishing directed network motifs that ordinary path homology cannot separate. We further establish a complete classification of generators of $\Omega_2^N$ and $\Omega_3^N$, determining all admissible combinatorial types. Finally, we characterize elements of the first Mayer path cycles group $Z_1^{N,q}$ in terms of weighted directed cycles arising from spanning-tree constructions.
These results provide the first systematic structural theory for Mayer path complexes and reveal new higher-order algebraic structures in directed graphs.
\end{abstract}

\tableofcontents

\section{Introduction}

In the standard homology setting, the boundary operator satisfies $d^{2} = 0$. 
The homology defined using an $N$-nilpotent boundary map satisfying $d^{N} = 0$ 
for an integer $N \geq 2$ on $N$-chain complexes is called \emph{Mayer homology}. 
This idea was first introduced by Mayer in 1942 \cite{mayer1942new}. 
The notion of an $N$-chain complex naturally arises as a graded object equipped 
with an $N$-nilpotent differential. From this structure, Mayer defined a family 
of homology groups depending on an additional parameter $q$, thereby extending 
the classical construction of cycles and boundaries.

Shortly thereafter, Spanier further developed Mayer’s theory and clarified its 
relationship with classical homology \cite{SpainerMayer}. In particular, he showed 
that Mayer homology coincides with standard homology when the coefficient field 
has prime characteristic $p$. Although Mayer’s original work remained relatively 
isolated for several decades, the underlying idea of $N$-complexes re-emerged in 
modern algebra. In particular, $N$-complexes have been systematically studied as 
natural generalizations of chain complexes, equipped with a differential satisfying 
$d^{N} = 0$, giving rise to a family of homology theories indexed by integers 
$1 \leq q \leq N - 1$. These structures provide a richer algebraic framework in 
which interactions between non-consecutive degrees become relevant.

A major development in this direction is due to Kapranov 
\cite{Kapranovqthanalog}, who introduced a $q$-analog of homological algebra in 
which the usual alternating signs are replaced by powers of a primitive $N$th root 
of unity. In this framework, one can define a Poincar\'e polynomial at $N$th root of unity $\xi$
\[
P_C(\xi) = \sum \dim(C_i)\, \xi^i
\]
for an $N$-complex, which vanishes for exact sequences. Moreover, when $\xi = -1$, 
this polynomial recovers the Euler characteristic in the classical case $N = 2$. 
Later, Tikaradze \cite{TIKARADZE} established a relation between the Poincar\'e 
polynomial and Mayer homology groups, showing that for any $1 \leq q \leq N - 1$ and $p$ is the $N$th root of unity,
\[
P_C(\xi) = \frac{1}{1 - \xi^{q}} \sum \xi^{i} 
\big( \dim(H_{i}^{N,q}) - \dim(H_{i-q}^{N, N-q}) \big).
\]

More recently, Mayer homology has found applications in topological data analysis 
and machine learning. In particular, persistent Mayer homology and the associated 
Mayer Laplacians have been introduced as extensions of persistent homology to 
$N$-complexes, providing refined topological and spectral invariants for complex 
data \cite{persistentmayerhomology}. These constructions admit stability 
properties, such as Wasserstein stability of persistence diagrams, and have 
demonstrated effectiveness in capturing multi-scale features in large and 
heterogeneous datasets. Furthermore, persistent Mayer homology has been 
successfully applied in molecular data analysis, where it yields enriched 
topological descriptors for machine learning tasks. Notably, in the context of 
protein--ligand binding affinity prediction, 
Mayer-homology-based features have been shown to outperform classical approaches 
by encoding more detailed geometric and topological information across multiple 
scales \cite{MHLmachinelearning}. This success motivates the in-depth study of Mayer homology.

Path homology, also known as GLMY homology after Grigor’yan, Lin, Muranov, and Yau, was introduced in \cite{homologiesofpathcomplexesanddigraphs} as a homology theory for directed graphs that overcomes the limitations of classical simplicial and graph homology. In this framework, path complexes replace simplicial complexes, and chain groups are generated by directed paths, allowing the detection of nontrivial higher-dimensional structures in digraphs. The theory retains key features of classical homology, including chain complexes, homology groups, and functoriality, and was further developed in \cite{GLMY2014,GrigoryanSurvey,homotopytheoryfordigraph}, where analogues of the Eilenberg--Steenrod axioms and additional structural properties were established. The applications of path homology span multiple fields, including molecular and materials science \cite{chen2023path}, network modeling \cite{pathnetworkmodelling,pathnetworkmodelling2}, and graph neural networks \cite{gopaths}. In the context of topological data analysis, persistent path homology \cite{Persistentpathhomology} and persistent path Laplacian \cite{PersPathLaplacian} have also been developed, providing refined multi-scale topological and spectral invariants. In addition, efficient computational methods have been proposed, including algorithms for computing $1$-dimensional persistent path homology \cite{efficientH1}.

 One of the key limitations of Mayer homology is its dependence on the choice of triangulation or simplicial complex, which may not be canonical for a given dataset or graph. In contrast, path complexes associated to directed graphs offer a canonical and combinatorially intrinsic construction, where the underlying structure is determined directly by the directed paths of the graph. As such, the combination of Mayer homology and path homology  can remove the ambiguity of  Mayer homology,  providing  a canonical invariant of a given directed graph, rather than of an underlying topological space.

By combining these two frameworks, we obtain Mayer path homology, which incorporates the higher-order algebraic flexibility of Mayer homology, given by the $N$-nilpotent differential $d^N = 0$, together with the direction-sensitive and canonical nature of path complexes. This allows us to study higher-order interactions in directed graphs without having to  deal with  arbitrary triangulations. 

Furthermore, this combination enhances the ability to capture interdimensional relationships in digraphs. While path homology detects higher-dimensional structures arising from directed paths, Mayer homology provides additional layers of algebraic structure through the parameter $q$, enabling a more refined analysis of how these features interact. 

It offers a unified framework that improves both the structural sensitivity and the interpretability of topological invariants for directed data.

In this work, we investigate the structure of $2$- and $3$-simplices in Mayer path complexes and analyze their corresponding generators. In particular, we provide an explicit description of the generators of these spaces, going beyond existing results that establish only the existence of a basis. For standard path complexes, although there are known theorems guaranteeing the construction of a basis, a complete enumeration of all possible generator types is not available in the literature. 
We address this gap by giving a full classification of generators in low dimensions, which allows for a direct comparison between standard path homology and Mayer path homology. This explicit characterization is essential for verifying structural properties of Mayer path complexes, as the interaction between generators plays a crucial role in understanding the effect of the $N$-nilpotent differential. Consequently, our results provide a more detailed combinatorial and algebraic understanding of path-based homology theories and lay the groundwork for further study of higher-dimensional structures and their persistence.

In Section \ref{Sec:Background}, we briefly review Mayer homology and path homology to  establish notations and facilitate our formulation. Section \ref{Sec:MayerPathHomology} is devoted to the theory of Mayer path homology. This paper ends with a conclusion.

\section{Background} \label{Sec:Background}

\subsection{Mayer Homology}
For this section, we consider a field $\K $ 
 containing a primitive $N$-th root of unity, where $N\geq 2$ is an integer.
\cite{persistentmayerhomology}.
\begin{definition}
\begin{enumerate}
    \item  An $N$-chain complex consists of a graded $\K$-linear space $C = (C_n)_{n\geq 0}$, equipped with a linear map $d : C_n \to C_{n-1}$ satisfying $d^N = 0$. The linear map $d_ : C_n\to C_{n-1}$ is called an $N$-differential or $N$-boundary operator.
    \item For an $N$-chain complex $(C_*,d) $ and $1\leq q\leq N-1$, the space of $q$-th $n$-cycles defined as \[Z^{N,q}_{n}=\{x\in C_n\ |\ d^qx=0\}\]
    and the space of the $q$-th $n$-boundaries is defined as \[B^{N,q}_{n}=\{d^{N-q}x\ |\ x\in C_{n+N-q}\}\]
     \end{enumerate}
\end{definition}
\begin{center}

\[
        \begin{array}{cccccccccccc}
        \cdots & \xrightarrow{d} & C_{n+N-1} & \xrightarrow{d^{N-1}} & C_n & \xrightarrow{d}&C_{n-1} & \xrightarrow{d^{N-1}} & \cdots \\
         & & \downarrow{d} & & \downarrow{i} & & \downarrow{d} & & \\

          &  &\vdots &  & \vdots &  & \vdots &  &  \\

        \cdots & \xrightarrow{d^{q}} & C_{n+N-q} & \xrightarrow{d^{N-q}} & C_{n} & \xrightarrow{d^{q}} & C_{n-q} & \xrightarrow{d^{N-q}} & \cdots \\
         & & \downarrow{d} & & \downarrow{i} & & \downarrow{d} & & \\

          &  &\vdots &  & \vdots &  & \vdots &  &  \\

        \cdots & \xrightarrow{d^{N-2}} & C_{n+2} & \xrightarrow{d^2} & C_{n} & \xrightarrow{d^{N-2}} & C_{n-N+2} & \xrightarrow{d^2} & \cdots \\
         & & \downarrow{d} & & \downarrow{i} & & \downarrow{d} & & \\

         \cdots & \xrightarrow{d^{N-1}} & C_{n+1} & \xrightarrow{d} & C_{n} & \xrightarrow{d^{N-1}} & C_{n-N+1} & \xrightarrow{d} & \cdots \\
        \end{array}
        \]
\end{center}

The Mayer homology of the $N$-chain complex $(C_*,d)$ is defined as
\[H^{N,q}_{n}(C_*, d) := Z^{N,q}_{n}/B^{N,q}_{n},\ n \geq  0\]
The Mayer Betti Numbers are the dimension of the Mayer homology groups at corresponding dimensions
\[\beta^{N,q}_{n}=\text{dim}(H^{N,q}_{n}(C_*, d))\]

The following example will demonstrate that the Mayer homology groups depend on the chosen triangulation of the topological space. In particular, different triangulation may yield non-isomorphic Mayer homology groups. Consequently, it does not define a topological invariant of spaces, but rather a combinatorial invariant of the chosen simplicial complex.
\begin{figure}[h]
    \centering
    
    \begin{subfigure}{0.45\linewidth}
        \centering
        \includegraphics[width=0.4\linewidth]{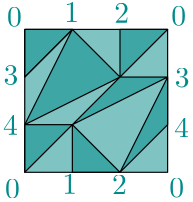}
        \caption{The minimal triangulation of torus, $T_1$}
    \end{subfigure}
    \hfill
    \begin{subfigure}{0.45\linewidth}
        \centering
        \includegraphics[width=0.4\linewidth]{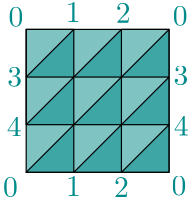}
        \caption{A triangulation of torus, $T_2$}
    \end{subfigure}
    
    \caption{Examples of triangulations of the torus}
    \label{fig:torusexample}
\end{figure}

\begin{example}
\label{ex:triangulation}
    Let $T_1$ and $T_2$ be two triangulation of the torus where $T_1$ is the minimal triangulation and $T_2$ as in  Figure \ref{fig:torusexample}. 
    The Mayer Homology groups of simplicial complex for $N=3$ induced by $T_2$ is computed at \cite{persistentmayerhomology}. 
    \begin{table}[H]
\centering
\caption{Mayer Homologies of Torus with respect to different triagulations}
\begin{tabular}{|l|c|c|c|c|c|c|} 
\toprule
 &  $\beta_0^{3,1}$ & $\beta_1^{3,1}$ & $\beta_2^{3,1}$ &$\beta_0^{3,2}$ & $\beta_1^{3,2}$ & $\beta_2^{3,2}$ \\
\hline
$T_1$ & $1$ &$18$  &0  &$0$  & $9$ & $10$ \\
\hline
$T_2$ & $0$ &$14$  &0  &$0$  & $7$ & $7$ \\
\bottomrule
\end{tabular}
\end{table}
\end{example}

For $N=2$, Mayer homology reduces to standard homology, and the Betti number $\beta_0$ counts the number of connected components, where connectivity is defined via edges (i.e., $1$-simplices). For $N \geq 3$, the interpretation of Betti numbers becomes more refined. For each $1 \leq q \leq N-1$, the group $H_0^{N,q}$ measures connected components with respect to a generalized notion of connectivity, where two vertices are considered connected if they are linked through $(N-q)$-dimensional cells. In this sense, the classical edge-based connectivity is replaced by higher-dimensional adjacency relations determined by the $N$-complex structure. A similar interpretation extends to $H_1^{N,q}$, where the corresponding Betti numbers capture higher-order cycle structures formed by interactions among cells of different dimensions, rather than just loops formed by edges.

\subsection{Path Homology}

In this section, we will provide a summary of the work by  Yau and coworkers \cite{Pathcomplexesandtheirhomologies,homologiesofpathcomplexesanddigraphs} as we revisit fundamental concepts in path homology. This includes an overview of paths on a finite set, the boundary operator on the path complex, and the homologies associated with the path complex.

\subsubsection{Paths on Finite Set}

Let $V$ be an arbitrary non-empty finite set which is also called a set of vertices. For $p \in \N$,  an \textit{elementary p-path} on $V$ is any sequence $i_0 \cdots i_p$ of $p + 1$ vertices in $V$. 
The vector space over $\K$ (fixed field) generated by all elementary $p$-paths on $V$ is denoted by $\Lambda_p(V)=\Lambda_p(V,\K)$. Elementary $p$-paths will be denoted as $e_{i_0\cdots i_p}$. For any element $v\in \Lambda_p(V) $, we have a unique representation 
 with $c^{i_0i_1\cdots i_p}\in\K$ as:
\[v=\sum_{i_0,i_1,\cdots, i_p \in V}c^{i_0i_1\cdots i_p}e_{i_0i_1\cdots i_p}\]

\begin{definition}
    
Define $\partial :\Lambda_p\to \Lambda_{p-1}$ so that \[\partial(e_{i_0i_1\cdots i_p})=\sum_{j=0}^p(-1)^j e_{i_0\cdots \hat{i}_j\cdots i_p}\] where $e_{i_0\cdots \hat{i}_j\cdots i_p}$ means that we omit the $j$-th term. 
\end{definition}
It is proven that $(\Lambda_p,\partial)_{p}$ forms a chain complex \cite{Pathcomplexesandtheirhomologies}. 
Since we are dealing with graphs which have no loops, we would like to impose that onto the definition of paths. An elementary path $i_0\cdots i_n$ is called \textit{regular} if any consecutive terms are different i.e $i_k\neq i_{k+1}$ for any $k\in \{0,\cdots n-1\}$. Otherwise, they are called \textit{irregular}.

\begin{definition}
Subspace of $\Lambda_p$ spanned by regular elementary paths 
\[\mathcal{R}_p=\mathcal{R}_p(V):=\textit{span}\{e_{i_0i_1\cdots i_p}\ |\ i_0i_1\cdots i_p \textit{ is regular}\}\]
Complementary subspace spanned by irregular  
\[\mathcal{N}_p=\mathcal{N}_p(V):=\textit{span}\{e_{i_0i_1\cdots i_p}\ |\ i_0i_1\cdots i_p \textit{ is not regular}\}\]
\end{definition}

Observe that $\partial (e_{121})=e_{21}-e_{11}+e_{12}$ which implies $\partial\mathcal{R}_p\not\subset \mathcal{R}_{p-1}$. 
Since the boundary of irregular term has to contain irregular terms, we have $\partial\mathcal{N}_p\subset \mathcal{N}_{p-1}$. 
We have $\mathcal{N}_p\cap \mathcal{R}_p= \emptyset$ thus $\Lambda_p=\mathcal{R}_p \bigoplus \mathcal{N}_p$. So by using isomorphism between chain complexes $\mathcal{R}_p \simeq \Lambda_p/ \mathcal{N}_p$, we can induce a boundary map on $\mathcal{R}_p$.
The induced boundary map $\overline{\partial}:\mathcal{R}_p\to \mathcal{R}_{p-1}$ is called a regular boundary map. This map will assign $0$ to irregular paths in the image. So, the chain complex of $V$ can be denoted as $(\mathcal{R}_p,\overline{\partial})_{p}$. 

The induced boundary operator is referred to as the regular boundary operator, while the original boundary map is called the non-regular boundary operator in \cite{homologiesofpathcomplexesanddigraphs}. From now on, we will denote the regular boundary operator by $\partial$.

\subsubsection{Path Complex}
\begin{definition}
    A path complex over a set V is a nonempty collection $P$ of elementary paths on V for any $n\in\N$ with the property:
    \begin{center}
         if $i_0\cdots i_n\in P$, then $i_0\cdots i_{n-1}\in P$ and $i_1\cdots i_n \in P$. 
    \end{center}

\end{definition}
The elementary paths in $P$ are called \textit{allowed} while other elementary paths on $V$ that are not in $P$ are called \textit{non-allowed}.

\begin{definition}
    A digraph $G$ is a pair $(V,E)$ where $V$ is the set of vertices and $E\subset V\times V$ is the set of edges. An edge is called directed $(\gamma_1,\gamma_2)\in E$ and the direction will be $\gamma_1\rightarrow \gamma_2$
\end{definition}

\begin{example}
    \begin{itemize}
        \item An abstract finite simplicial complex $K$ is a collection of subsets of a finite vertex set V that satisfies the following property if $\sigma \in K$, then any subset of $\sigma$ is also in $K$. By enumerating the vertices, we can transform $\sigma$ to an elementary path over V. Denote the new representation as $P(K)$. So the allowed $n$-paths in $P(K)$ are $n$-simplexes. A path complex can be derived from finite simplicial complex.

        \item A path complex can be derived from digraph G by taking all paths in G as allowed paths. 
       
    \end{itemize}
\end{example}

\subsubsection{Path Homology}
For a given path complex $P$, we define the $\K$-linear space $\mathcal{A}_n$ as a span of all elementary $n-$path from $P$.
\[\mathcal{A}_n=\mathcal{A}_n(P)=\textit{span}\{e_{i_0i_1\cdots i_n}\ |\ i_0i_1\cdots i_n \in P\}\]

  $(\mathcal{A}_n,\partial)$ may not form chain complex all the time. For example,
      for $P=\{e_{0,1},e_{1,2},e_{0,1,2}\}$, we have $\partial(e_{0,1,2})=e_{1,2}-e_{0,2}+e_{0,1}$ where $e_{0,2}$ is not allowed.
  
  So we are changing our domain to a restricted version $\Omega_{n}$ which is defined as infimum chain complexes  \[\Omega_{n}=\Omega_{n}(P)=\{v\in \mathcal{A}_n\ |\ \partial v\in\mathcal{A}_{n-1}\}.\]
 Observe that $\partial\Omega_n\subset \Omega_{n-1}$. The elements of $\Omega_{n}$ are called $\partial$-invariant $n$-paths. The homology of $(\Omega,\partial)$ will be called the path homology groups of the path complex P 

\[H_n=H_n(P)={\rm Ker}\partial|_{\Omega_n}/ {\rm Im}\partial|_{\Omega_{n+1}}.\]

\section{Mayer Path Homology}\label{Sec:MayerPathHomology}

In this section, we introduce Mayer Path Homology. 
For any $ n \in \mathbb{Z}_{\ge 0} $, let $\Lambda_p = \Lambda_p(V)$ be the $\mathbb{K}$-linear space spanned by all elementary $p$-paths with coefficients in the field $\mathbb{K}$. 
Throughout this paper, we take $\mathbb{K} = \mathbb{C}$. 
Elementary $p$-paths are denoted by $e_{i_0 \cdots i_p}$.

\begin{definition}
Define the boundary operator $\partial : \Lambda_p \to \Lambda_{p-1}$ by
\[
\partial(e_{i_0 i_1 \cdots i_p}) = \sum_{j=0}^p \xi^j e_{i_0 \cdots \hat{i_j} \cdots i_p},
\]
where $\xi$ is an $N$-th root of unity and $e_{i_0 \cdots \hat{i_j} \cdots i_p}$ denotes the path obtained by omitting the $j$-th index.
\end{definition}

In \cite{Kapranovqthanalog}, the $q$-analogues of the basic numbers and basic factorials for any $q\in\C$ are defined by
\[
[n]_q=\frac{(1-q)^n}{1-q}=1+q+\cdots+q^{n-1},
\]
and
\[
[n!]_q=[1]_q[2]_q\cdots[n]_q
=\sum_{w\in S_n}q^{\ell(w)},
\]
where $\ell(w)$ denotes the length of the permutation $w\in S_n$.
 By \cite[Lemma 0.3]{Kapranovqthanalog}, for each $r\geq 1$ one has
\[
\partial^r
=
[r!]_{q}
\sum_{1\leq j_1<\cdots<j_r}
\xi^{j_1+\cdots+j_r}
\partial_{j_1}\cdots\partial_{j_r},
\]
where $\partial_i$ is the operator obtained by deleting the $i$-th index and $\partial = \sum_{i}^nq^i\partial_i$.
\begin{lemma}
$(\Lambda_*, \partial)$ forms an $N$-chain complex, where $\Lambda_* = \{\Lambda_p\}_{p \in \mathbb{N}}$.
\end{lemma}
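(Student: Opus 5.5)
We must show $\partial^N=0$ on $\Lambda_p$, where $\partial=\sum_{j=0}^p \xi^j\partial_j$ and $\xi$ is a primitive $N$-th root of unity. Primitivity is essential: for $\xi=1$ and $N>2$ the claim fails, since then $\partial^2\neq 0$ and $\partial^N\neq0$. We may either apply the cited formula from \cite[Lemma 0.3]{Kapranovqthanalog} with $q=\xi$, or argue directly. I would first derive that formula in the present setting and then specialize to $r=N$.

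\textbf{Step 1 (commutation rule).} The face operators satisfy the simplicial identities $\partial_i\partial_j=\partial_{j-1}\partial_i$ for $i<j$, as operators $\Lambda_p\to\Lambda_{p-2}$. This holds because deleting index $j$ and then index $i<j$ removes the same two entries as deleting $i$ first and then the entry that has shifted to position $j-1$.

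\textbf{Step 2 (normal form).} Expand $\partial^r=\sum \xi^{j_1+\cdots+j_r}\partial_{j_1}\cdots\partial_{j_r}$, where each $j_k$ ranges over the indices valid at its stage. Each word $\partial_{j_1}\cdots\partial_{j_r}$ applied to $e_{i_0\cdots i_p}$ deletes a set $S=\{s_1<\cdots<s_r\}$ of original positions; the result is the elementary path with those entries removed.

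Fix $S$. The words producing $S$ correspond bijectively to orders $w\in S_r$ in which the elements of $S$ are deleted. When the position $s$ is deleted, its current index equals $s$ minus the number of smaller elements of $S$ already removed. Summing these exponents gives
\[
\sum_{k}s_k-\#\{\text{pairs } s_a<s_b \text{ with } s_a \text{ deleted before } s_b\}.
\]
That count equals $\binom r2-\ell(w)$ for the reversed convention. So the coefficient of the face $S$ is $\xi^{\sum s_k-\binom r2}\sum_{w\in S_r}\xi^{\ell(w)}=\xi^{\sum s_k-\binom r2}[r!]_\xi$. This recovers Kapranov's formula up to the harmless normalization of exponents.

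\textbf{Step 3 (conclusion).} Take $r=N$. The factor $[N]_\xi=1+\xi+\cdots+\xi^{N-1}=\frac{1-\xi^N}{1-\xi}=0$ because $\xi\neq1$ and $\xi^N=1$. Hence $[N!]_\xi=0$ and $\partial^N=0$ on every $\Lambda_p$. For $p<N-1$ this is automatic, since $\partial^N$ lands in negative degrees, which are zero.

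\textbf{Main obstacle.} The delicate point is the combinatorial bookkeeping in Step 2: correctly identifying the exponent sum as $\sum s_k$ minus an inversion count, so that the sum over deletion orders is exactly the Mahonian generating function $\sum_{w}\xi^{\ell(w)}=[r!]_\xi$. I would verify this by induction on $r$ using Step 1: inserting the last deletion into the $r$ possible relative positions contributes a factor $1+\xi+\cdots+\xi^{r-1}=[r]_\xi$.

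Finally, since the statement concerns $\Lambda_*$, I would remark that the same argument holds verbatim on the regular quotient $\mathcal R_*$. There $\partial\mathcal N_p\subset\mathcal N_{p-1}$ still holds, because a deletion never destroys an irregular repetition without leaving an irregular term.
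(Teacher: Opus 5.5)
Your proof of the lemma as stated is correct and follows the paper's route. You write $\partial^r$ as $[r!]_\xi$ times a sum of iterated face operators, take $r=N$, and use $[N]_\xi=\frac{1-\xi^N}{1-\xi}=0$. The only difference is that you derive the $q$-factorial identity by counting deletion orders instead of citing Kapranov. That makes the exponent normalization explicit, and your coefficient $\xi^{\sum s_k-\binom r2}[r!]_\xi$ checks out: for example, $\partial^2e_{abc}=(1+\xi)(e_c+\xi e_b+\xi^2e_a)$. The normalization is irrelevant once $[N!]_\xi=0$. Note that Step 3 uses only $\xi\neq 1$ and $\xi^N=1$, so primitivity is not essential; $\xi=-1$ with $N=4$ also gives $\partial^4=0$.

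Your closing remark about $\mathcal R_*$, however, is false for $N\geq 3$ and should be dropped. The classical inclusion $\partial\mathcal N_p\subset\mathcal N_{p-1}$ holds because the two deletions of a repeated pair $aa$ at positions $k,k+1$ give the same path, with coefficients $(-1)^k+(-1)^{k+1}=0$. With the Mayer differential these coefficients become $\xi^k(1+\xi)\neq 0$. Concretely, $\partial e_{aab}=(1+\xi)e_{ab}+\xi^2e_{aa}\notin\mathcal N_1$.

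Hence $\Lambda_*/\mathcal N_*$ inherits no differential. Moreover, the projected regular operator $\bar\partial$ on $\mathcal R_*$ is not $N$-nilpotent. For $N=3$ one gets $\bar\partial e_{abab}=e_{bab}+e_{aba}$ and $\bar\partial^2e_{abab}=(1+\xi^2)(e_{ab}+e_{ba})$. Then $\bar\partial^3e_{abab}=(1+\xi)(1+\xi^2)(e_a+e_b)=e_a+e_b\neq 0$.

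This does not affect the lemma, which concerns $\Lambda_*$ only. But the regular version does not follow ``verbatim''; it actually fails. The point is not merely technical: the later sections work with the regular boundary, for instance the double edges $e_{i,j,i}$ in $\Omega_2^N$. For a digraph with a double edge $a\rightleftarrows b$, the path $e_{abab}$ lies in $\Omega_3^3$ and has $\bar\partial^3e_{abab}\neq 0$.
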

\begin{proof}
Taking $r=N$ and $q=\xi$ in the above formula gives where $\xi $ is the $N$th root of unity
\[
\partial^N
=
[N!]_{\xi}
\sum_{1\leq j_1<\cdots<j_N}
\xi^{j_1+\cdots+j_N}
\partial_{j_1}\cdots\partial_{j_N}.
\]
Since $\xi$ is a primitive $N$-th root of unity, we have
\[
[N]_{\xi}=1+\xi+\cdots+\xi^{N-1}=0.
\]
Therefore,
\[
[N!]_{\xi}
=
[1]_{\xi}[2]_{\xi}\cdots[N]_{\xi}
=0.
\]
Hence
\[
\partial^N=0.
\]
\end{proof}

We again distinguish between the regular and non-regular subspaces of $\Lambda_p$. 
The boundary operator $\partial$ will be taken as the regular boundary map, as explained in the section on Path Homology.

For a given path complex $P$, define the $\mathbb{C}$-linear space $\mathcal{A}_n$ as the span of all elementary $n$-paths from $P$:
\[
\mathcal{A}_n = \mathcal{A}_n(P) = \operatorname{span}\{ e_{i_0 i_1 \cdots i_n} \mid i_0 i_1 \cdots i_n \in P \}.
\]
For example,
\[
\partial(e_{123}) = e_{23} + \xi e_{13} + \xi^2 e_{12},
\]
where $\xi$ is the $N$-th root of unity and $e_{13}\not\in\mathcal{A}_1$. 
Hence, $(\mathcal{A}_n, \partial)$ does not always form an $N$-chain complex.

\begin{definition}
    For $N\geq 2$ and $1\leq q\leq N-1$, define the space of $\partial$-invariant $n$-paths at level $(N,q)$ as
\[
\Omega_n^{N,q} = \Omega_n^{N,q}(P) = \{ v \in \mathcal{A}_n \mid \partial^q v \in \mathcal{A}_{n-q} \}.
\]

The intersection is called $\partial$-invariant $n$-paths and is denoted as follows:
\[
\Omega_n^N = \bigcap_{1 \le q \le N-1} \Omega_n^{N,q}(P)
= \{ v \in \mathcal{A}_n \mid \partial^q v \in \mathcal{A}_{n-q} \}.
\]
\end{definition}

If $v \in \Omega_n^N$, then $v \in \Omega_n^{N,q}$ for all $q \in \{1, \ldots, N-1\}$. 
Thus, for $1\leq q\leq N-2$, $\partial^q v \in \mathcal{A}_{n-q}$. 
Consequently, $\partial^{q-1}(\partial v) \in \mathcal{A}_{n-q}$ for all $q \in \{1, \ldots, N-1\}$, which implies
\[
\partial v \in \Omega_{n-1}^{N,q} \quad \text{for all } q \in \{1, \ldots, N-2\}.
\]
For $q = N-1$, we have $\partial^{N-1}(\partial v) = 0 \in \mathcal{A}_{n-N}$. 
Thus, $\partial(v) \in \Omega_{n-1}^{N,q}$ for all $1 \le q \le N-1$, and therefore
$
(\Omega_n^N, \partial)
$
forms an $N$-chain path complex. The following example shows that, in general, $\Omega_n^{N,q}$ alone does not form a chain complex.
\begin{figure}
    \centering
    \includegraphics[width=0.3\linewidth]{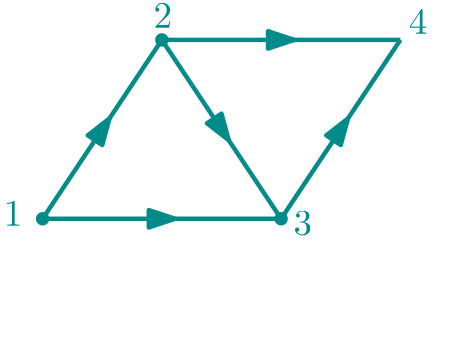}
    \caption{Example 3.1}
    \label{fig:3.1example}
\end{figure}

\begin{example}\label{diamond_example}

    Let $P$ be the path complex induced by the digraph with $V=\{e_1,e_2,e_3,e_4\}$ and $E=\{e_{1,2},e_{1,3},e_{2,3},e_{2,4},e_{3,4}\}$. 
    The elementary path spaces will be as follows 
    \[\mathcal{A}_2=<e_{1,2,3},e_{1,2,4},e_{1,3,4},e_{2,3,4}>, \ \ \ \   \mathcal{A}_3=<e_{1,2,3,4}>.\]
    We compute
    \[\partial(e_{1,2,3,4})=e_{2,3,4}+\xi e_{1,3,4}+\xi^2 e_{1,2,4}+\xi^3 e_{1,2,3}\]
    \[\partial^2(e_{1,2,3,4})=(\xi^2+\xi^3)e_{1,4}+w, \ \ \ w\in\mathcal{A}_1.\]

    For $(\xi^2+\xi^3)e_{1,4}\in\mathcal{A}_1$, we have $\xi^2+\xi^3=0$ which only occur when $N=2$. 
    We have the $\partial$-invariant paths as follows
    \[\Omega^{2,1}_2=\Omega^{2}_2=<e_{1,2,3},e_{1,2,4}-e_{1,3,4},e_{2,3,4}>, \ \ \ \   \Omega^{2,1}_3=\Omega^{2}_3=<e_{1,2,3,4}>,\]
    \[\Omega^{3,1}_2=<e_{1,2,3},e_{1,2,4}-e_{1,3,4},e_{2,3,4}>, \ \ \   \Omega^{3,2}_2=<e_{1,2,3},e_{1,2,4},e_{1,3,4},e_{2,3,4}>, \ \ \ \Omega^{3,1}_3=<e_{1,2,3,4}>,\ \ \    \Omega^{3,2}_3=0,\]
    \[\Omega^{3}_2=\Omega^{3,1}_2\cap \Omega^{3,2}_2 =<e_{1,2,3},e_{1,2,4}-e_{1,3,4},e_{2,3,4}>, \ \ \ \   \Omega^{3}_3=\Omega^{3,1}_3\cap \Omega^{3,2}_3=0.\] 
    Observe that $\partial(\Omega^{3,1}_3)\not\subset \Omega^{3,1}_2$ where $\partial(\Omega^{2}_3)\subset \Omega^{2}_2$.
    
\end{example}

\begin{definition}
Let $P$ be a path complex associated with the $N$-chain complex $(\Omega_n^N, \partial)$. 
For any $1 \le q \le N-1$, define the following spaces:

\begin{itemize}
    \item The space of $q$-th $n$-cycles:
    \[
    Z_n^{N,q} = \{ x \in \Omega_n^N \mid \partial^q(x) = 0 \}.
    \]

    \item The space of $q$-th $n$-boundaries:
    \[
    B_n^{N,q} = \{ \partial^{N-q}(x) \mid x \in \Omega_{n+N-q} \}.
    \]
\end{itemize}

Then, the Mayer Path Homology is defined as
\[
H_n^{N,q}(P) := Z_n^{N,q} / B_n^{N,q} 
= \ker(\partial_n^q) / \operatorname{im}(\partial_{n+N-q}^{N-q}).
\]

The rank of $H_n^{N,q}(P)$ is called the Mayer path Betti number.
\end{definition}

For $q\geq n$, then $\partial^q(v)=0$ or in $\mathcal{A}_0$ for every $v\in \Omega_n^N$. Consequently, $Z_n^{N,q}=\Omega_n^N$

\begin{example} Let $P$ be the path complex described in Example 3.1 and in Figure \ref{fig:3.1example}
    
    For $N=2$,
\[0\rightarrow\Omega^{2}_3\rightarrow\Omega^{2}_2\rightarrow\Omega^{2}_1\rightarrow\Omega^{2}_0\rightarrow 0 \]
\[Z_0^2=\Omega^{2}_0, \ \ Z_1^2=<-e_{1,2}+e_{1,3}-e_{2,3},-e_{2,3}+e_{2,4}-e_{3,4}>,\ \ Z_2^2=<e_{1,2,3}+e_{1,2,4}-e_{1,3,4}+e_{2,3,4}>,\ \ Z_3^2=0, \]
\[B_0^2=<e_2-e_1,e_3-e_1,e_4-e_2>,\ \ B_1^2=<e_{1,2}-e_{1,3}+e_{2,3},e_{3,4}-e_{2,4}+e_{2,3}>, \]
\[B_2^2=<e_{1,2,3}-e_{1,2,4}+e_{1,3,4}+e_{2,3,4}>, \]
\[H_0^2=\Omega^{2}_0 / B_0^2=\C,\ \ H_1^2=Z_1^2 / B_1^2=0,\ \ H_2^2=Z_2^2 / B_2^2=0,\ \ H_3^2=Z_3^2 / B_3^2=0, \]
\[H_n^{2,1}(P)=\begin{cases}
    \C & n=0\\
    0 & n\geq 1.
\end{cases}\]

 For $N=3$ and $q=1$,

 \[0\rightarrow\Omega^{3}_2=<e_{1,2,3},e_{1,2,4}-e_{1,3,4},e_{2,3,4}>\rightarrow\Omega^{3}_1\rightarrow\Omega^{3}_0\rightarrow 0, \]
\[Z_0^{3,1}=\Omega^{3}_0 \ \ Z_1^{3,1}=<-\xi e_{1,2}+\xi e_{1,3}+e_{2,4}-e_{3,4}>,\ \ Z_2^{3,1}=0, \]
\[B_0^{3,1}=<-\xi e_1-e_2 +\xi^2 e_3,-e_2+e_3,- e_2-\xi e_3-\xi e_4>,\  \ B_k^{3,1}=0\ \forall k\geq 1\]
\[  H_0^{3,1}=Z_0^{3,1}/ B_0^{3,1}=\C,\ \ H_1^{3,1}=Z_1^{3,1}=\C,\ \ H_2^2=Z_2^{3,1}=0,\]
\[H_n^{3,1}(P)=\begin{cases}
    \C & n=0,1\\
    0 & n\geq 2.
\end{cases}\]

For $N=3$ and $q=2$,

\[Z_0^{3,2}=\Omega^{3}_0, \ \ Z_1^{3,2}=\Omega^{3}_1,\ \ Z_2^{3,2}=0, \]
\[B_0^{3,2}=<e_2+\xi e_1,e_3+\xi e_1,e_3+\xi e_2,e_4+\xi e_2>,\] 
\[ B_1^{3,2}=<\xi^2e_{1,2}+\xi e_{1,3}+e_{2,3},-e_{3,4}+e_{2,4}-\xi^2 e_{1,3}+\xi^2 e_{1,2},e_{3,4}+\xi e_{2,4}+\xi^2 e_{2,3}>, \]
\[  H_0^{3,2}=Z_0^{3,2}/ B_0^{3,2}=0,\ \ H_1^{3,2}=Z_1^{3,2}/B_1^{3,2}=\C,\ \ H_2^2=Z_2^{3,2}=0,\]
\[H_n^{3,2}(P)=\begin{cases}
    \C & n=1\\
    0 & otherwise.
\end{cases}\]

\end{example}

In contrast to the classical simplicial setting, where different triangulations of the same topological space may lead to different Mayer homology groups as it shown in the Example \ref{ex:triangulation} previously. The path-complex construction associated to a directed graph is canonical. Consequently, Mayer Path Homology does not suffer from triangulation ambiguity and yields a well-defined invariant of directed graphs up to isomorphism.

    In standard path homology, there are certain reductions on graphs that preserves the homology groups. Mayer path homology does not admit such reductions, and is more sensitive to combinatorial structure of a digraph. The following example will showcase a case where standard path homology cannot differentiate the following non-isomorphic digraphs. 
\begin{figure}[H]
     \centering
     \begin{subfigure}{0.49\textwidth}
         \centering
         \includegraphics[scale=0.3]{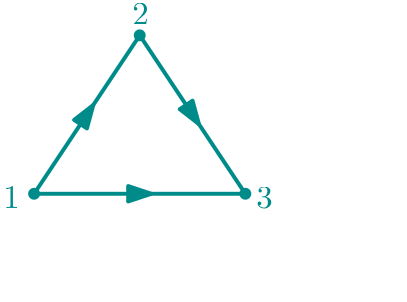}
         \caption{$T_1$}
         \label{T1}
     \end{subfigure}
     \hfill
     \begin{subfigure}{0.49\textwidth}
         \centering
         \includegraphics[scale=0.3]{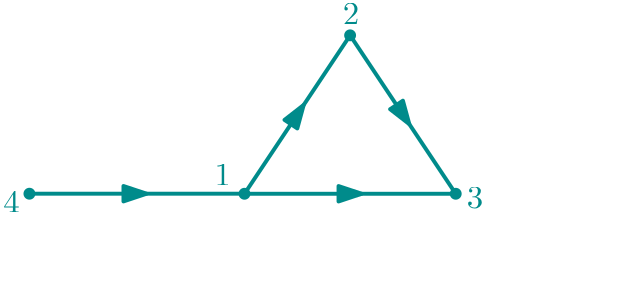}
         \caption{$T_2$}
         \label{T2}
     \end{subfigure}
        \caption{Digraphs $T_1$ and $T_2$ in Example \ref{comparisonoftriangleandaxtension}  }
        \label{fig:t1t2}
        
\end{figure}

  \begin{example}
  \label{comparisonoftriangleandaxtension}
     
 Let $T_1$ and $T_2$ be digraphs in Figure \ref{fig:t1t2}. $\Omega_0^N$ is generated by vertices and $\Omega_1^N$ by edges.

\begin{table}[H]
\centering
\caption{Comparison of Mayer path structures for $T_1$ and $T_2$}
\renewcommand{\arraystretch}{1.3}
\begin{tabular}{|c|c|c|c|}
\hline
 &  & $T_1$ & $T_2$ \\
\hline

\multirow{7}{*}{$N=2$} 

& $\Omega_2$ & $\{e_{1,2,3}\}$ & $\{e_{1,2,3}\}$ \\
\cline{2-4}
& $Z_0^{2,1}$ & $\langle \gamma_1 \rangle$ & $\langle \gamma_2 \rangle$ \\
& $B_0^{2,1}$ & $\langle e_2-e_1, e_3-e_1 \rangle$ & $\langle e_2-e_1, e_3-e_1, e_4-e_1 \rangle$ \\
& $H_0^{2,1}$ & $\mathbb{C}$ & $\mathbb{C}$ \\
& $H_1^{2,1}$ & $0$ & $0$ \\

\hline

\multirow{12}{*}{$N=3$} 
& \multicolumn{3}{c|}{$q=1$} \\
\cline{2-4}

& $\Omega_2$ & $\{e_{1,2,3}\}$ & $\{e_{1,2,3}\}$ \\
\cline{2-4}
& $Z_0^{3,1}$ & $\langle \gamma_1 \rangle$ & $\langle \gamma_2 \rangle$ \\
& $B_0^{3,1}$ & $\langle -\xi e_1 - e_2 - \xi^2 e_3 \rangle$ & $\langle -\xi e_1 - e_2 - \xi^2 e_3 \rangle$ \\
& $H_0^{3,1}$ & $\mathbb{C}^2$ & $\mathbb{C}^3$ \\
\cline{2-4}
& $Z_1^{3,1}=H_1^{3,1} $ & $0$ & $0$ \\

\cline{2-4}
& \multicolumn{3}{c|}{$q=2$} \\
\cline{2-4}
& $Z_0^{3,2}$ & $\langle \gamma_1 \rangle$ & $\langle \gamma_2 \rangle$ \\
& $B_0^{3,2}$ & $\langle e_2+\xi e_1, e_3+\xi e_1, e_3+\xi e_2 \rangle$ 
& $\langle e_2+\xi e_1, e_3+\xi e_1, e_3+\xi e_2, e_1+\xi e_4 \rangle$ \\
& $H_0^{3,2}$ & $0$ & $0$ \\
\cline{2-4}
& $Z_1^{3,2}=H_1^{3,2}$ & $0$ & $0$ \\

\hline
\end{tabular}
\end{table}
    \end{example}

In the previous example, the digraph $T_1$ corresponds to the feed-forward loop motif, illustrating how Mayer path homology captures additional structural information about the underlying digraph. In the following example, we consider other fundamental directed network motifs, including the biparallel, bi-fan, and the $4$-node feedback loop, which frequently arise in complex systems such as neural networks and electronic circuits \cite{networkmotifs}. 

It is known that standard path homology can distinguish certain motifs, such as the biparallel and the bi-fan structures. However, it fails to differentiate between others; in particular, the $4$-node feedback loop and the bi-fan yield identical path homology groups. Similarly, the biparallel and the feed-forward loop are not distinguished by path complexes, as they also produce the same path homology. In contrast, Mayer path homology provides a finer invariant: for $N=3$, it assigns distinct Betti numbers to each of these motifs, thereby distinguishing structures that are indistinguishable under standard path homology. This highlights the increased sensitivity of Mayer path homology in detecting higher-order structural differences in directed networks.

\begin{figure}[H]
     \centering
     \begin{subfigure}{0.28\textwidth}
         \centering
         \includegraphics[scale=0.3]{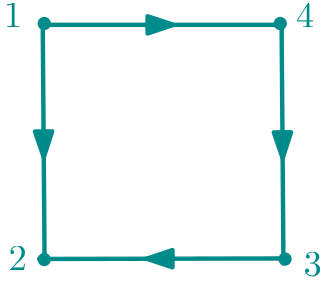}
         \caption{$L_1$ - 4 node-feedback loop}
         \label{L1}
     \end{subfigure}
     \hfill
     \begin{subfigure}{0.25\textwidth}
         \centering
         \includegraphics[scale=0.3]{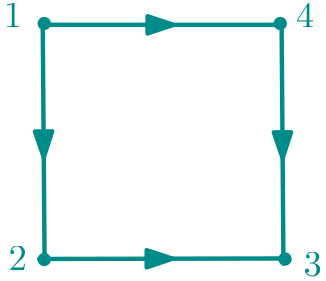}
         \caption{$L_2$ - Bi-parallel}
         \label{L2}
     \end{subfigure}
     \hfill
     \begin{subfigure}{0.25\textwidth}
         \centering
         \includegraphics[scale=0.3]{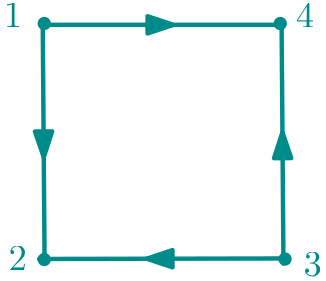}
         \caption{$L_3$ - Bi-fan}
         \label{L3}
     \end{subfigure}
     
        \caption{Digraphs used in Example \ref{nonisomorhiccomparison}  }
        \label{fig:three graphs}
\end{figure}
 
\begin{example}
\label{nonisomorhiccomparison}

    Let $L_1,L_2$ and $L_3$ be the digraphs as in Figure \ref{fig:three graphs} with $V$ to be common vertex set and $E_i$ be the edge set for each. For each cases $\Omega_0^N(L_i)=Z_0^{2,1}(L_i)=Z_0^{3,1}(L_i)=Z_0^{3,2}(L_i)=<V>$, $\Omega_1^N(L_i)=Z_1^{3,2}(L_i)=<E_i>$ where $i=1,2,3$. Observe that $\Omega_2^{N}(L_1)=\Omega_2^{N}(L_3)=\emptyset$ where $\Omega_2^{N}(L_2)=\langle e_{1,4,3}-e_{{1,2,3}}\rangle$
    \[ B_1(L_1)= \bordermatrix{%
   & e_{1,2} & e_{1,4} & e_{3,2} & e_{4,3}\cr
e_{1} & \xi & \xi & 0 & 0 \cr
e_{2} & 1 & 0 &  1 & 0 \cr
e_{3} &0 & 0 &  \xi & 1\cr
e_{4} &0 & 1 &  0 &\xi\cr
     },\quad B_1(L_3)= \bordermatrix{%
   & e_{1,2} & e_{1,4} & e_{3,2} & e_{3,4}\cr
e_{1} & \xi & \xi & 0 & 0 \cr
e_{2} & 1 & 0 &  1 & 0 \cr
e_{3} &0 & 0 &  \xi & \xi\cr
e_{4} &0 & 1 &  0 &1\cr
     } \] \[B_1(L_2)= \bordermatrix{%
   & e_{1,2} & e_{1,4} & e_{2,3} & e_{4,3}\cr
e_{1} & \xi & \xi & 0 & 0 \cr
e_{2} & 1 & 0 &  \xi & 0 \cr
e_{3} &0 & 0 &  1 & 1\cr
e_{4} &0 & 1 &  0 &\xi\cr
     },\quad B_2(L_2)= \bordermatrix{%
   & e_{1,4,3}-e_{1,2,3}  \cr
e_{1,2} & -\xi^2   \cr
e_{1,4} &\xi^2   \cr
e_{2,3} & -1   \cr
e_{4,3} &1   \cr
     }\]
     \[\quad B_1B_2(L_2)=\bordermatrix{%
   & e_{1,4,3}-e_{1,2,3}  \cr
e_{1} &  0  \cr
e_{2} &-\xi^2-\xi   \cr
e_{3} & 0   \cr
e_{4} &  \xi^2+\xi\cr
     }.\]
     For $N=2$,
     \[Z_1^{2,1}(L_1)=Ker(B_1(L_1))=\langle e_{1,4}+e_{4,3}+e_{3,2}-e_{1,2} \rangle, \quad Z_1^{2,1}(L_2)=Ker(B_1(L_2))=\langle e_{1,4}+e_{4,3}-e_{2,3}-e_{1,2} \rangle,\]
     \[Z_1^{2,1}(L_3)=Ker(B_1(L_3))=\langle e_{1,4}-e_{3,4}+e_{3,2}-e_{1,2} \rangle,\]
    \[B_0^{2,1}(L_1)= \langle e_2-e_1, e_3-e_4,e_2-e_3 \rangle, \quad B_0^{2,1}(L_2)=\langle e_2-e_1, e_3-e_4,e_2-e_3 \rangle,\quad B_0^{2,1}(L_3)=\langle e_2-e_1, e_3-e_4,e_2-e_3 \rangle,\]

     For $N=3$,
     \[Z_1^{3,1}(L_1)=Ker(B_1(L_1))=0\quad Z_1^{3,1}(L_2)=Ker(B_1(L_2))=\langle \xi e_{1,2}-\xi e_{1,4}-e_{2,3}+e_{4,3}\rangle, \]\[Z_1^{3,1}(L_3)=Ker(B_1(L_3))= \langle e_{1,2}-e_{1,4}-e_{3,2}+e_{3,4}\rangle,\]\[B_0^{3,1}(L_1)=0 \quad B_0^{3,1}(L_2)=\langle(\xi+\xi^2)(e_4-e_2)\rangle,\quad B_0^{3,1}(L_3)=0\]
     \[B_0^{3,2}(L_1)=\langle e_4+\xi e_1,e_3+\xi e_4, e_2+\xi e_3,e_2+\xi e_1\rangle, \quad B_0^{3,2}(L_2)=\langle  e_4+\xi e_3, e_2+\xi e_3,e_2+\xi e_1\rangle,\]\[ B_0^{3,2}(L_3)=\langle  e_3+\xi e_4, e_3+\xi e_2,e_2+\xi e_1\rangle, \quad B_1^{3,2}(L_2)=\langle\xi e_{1,2}+\xi e_{1,4}, e_{1,2}+\xi e_{2,3}, e_{2,3}+e_{4,3}\rangle.\]
Observe that $B_1^{3,1}(L_i)=0$ for all $i=1,2,3$ and $B_1^{3,2}(L_i)=0$ for $i=1,3$.

\begin{table}[H]
\centering
\caption{Comparison of path and Mayer path homologies for $L_1,L_2,L_3$}
\renewcommand{\arraystretch}{1.3}
\begin{tabular}{|c|c|c|c|c|c|c|}
\hline
 & \multicolumn{2}{c|}{$N=2$} & \multicolumn{4}{c|}{$N=3$} \\
\cline{2-7}
 & \multicolumn{2}{c|}{$q=1$} & \multicolumn{2}{c|}{$q=1$} & \multicolumn{2}{c|}{$q=2$} \\
\cline{2-7}
 & $H_0^{2,1}$ & $H_1^{2,1}$ & $H_0^{3,1}$ & $H_1^{3,1}$ & $H_0^{3,2}$ & $H_1^{3,2}$ \\
\hline
$L_1$ & $\C$ & $\C$ & $\C^4$ & $0$ & $0$ & $\C^4$ \\
\hline
$L_2$ & $\C$ & $0$ & $\C^3$ & $\C$ & $\C$ & $\C^4$ \\
\hline
$L_3$ & $\C$ & $\C$ & $\C^4$ & $\C$ & $\C^4$ & $\C^4$ \\

\hline
\end{tabular}
\end{table}
\end{example}

\subsection{Properties of $N$-Chain Complexes of Path Complexes induced by Digraphs}

The properties of allowed and $\partial$-invariant paths on digraphs are examined in detailed by Grigor’yan and at al \cite{homotopytheoryfordigraph}. 
After modifying the definition of the boundary map, we will investigate whether the properties remain unchanged or undergo alterations. Before that we will recall some notions. From now on we will mean simple directed digraph by saying digraph. Simple directed graph is a directed graph with no multiple edges and no loops.
\subsubsection{$\partial $-invariant $2$-path $\Omega_2^{N}$}

\begin{figure}

    \begin{subfigure}{0.3\textwidth}
        \centering
        \includegraphics[width=0.5\linewidth]{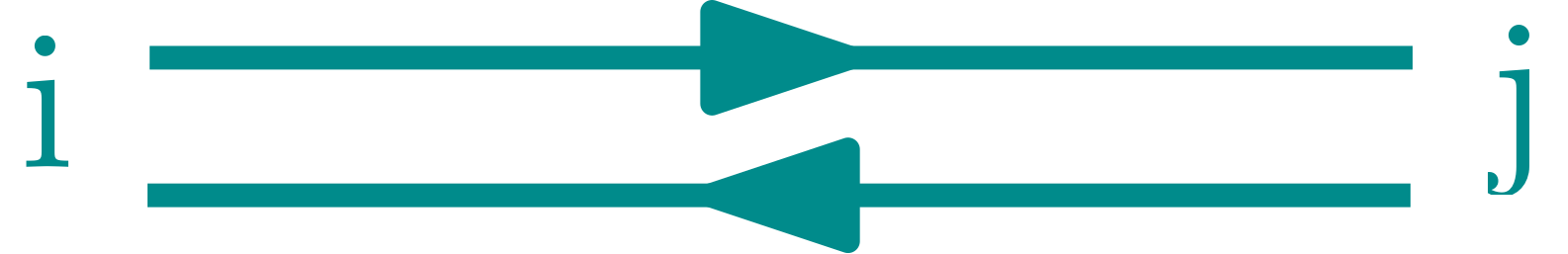}
    \caption{Double edge}
    \label{fig:placeholder}
    \end{subfigure}
    \hfill
    \begin{subfigure}{0.3\textwidth}
        \centering
        \includegraphics[width=0.5\linewidth]{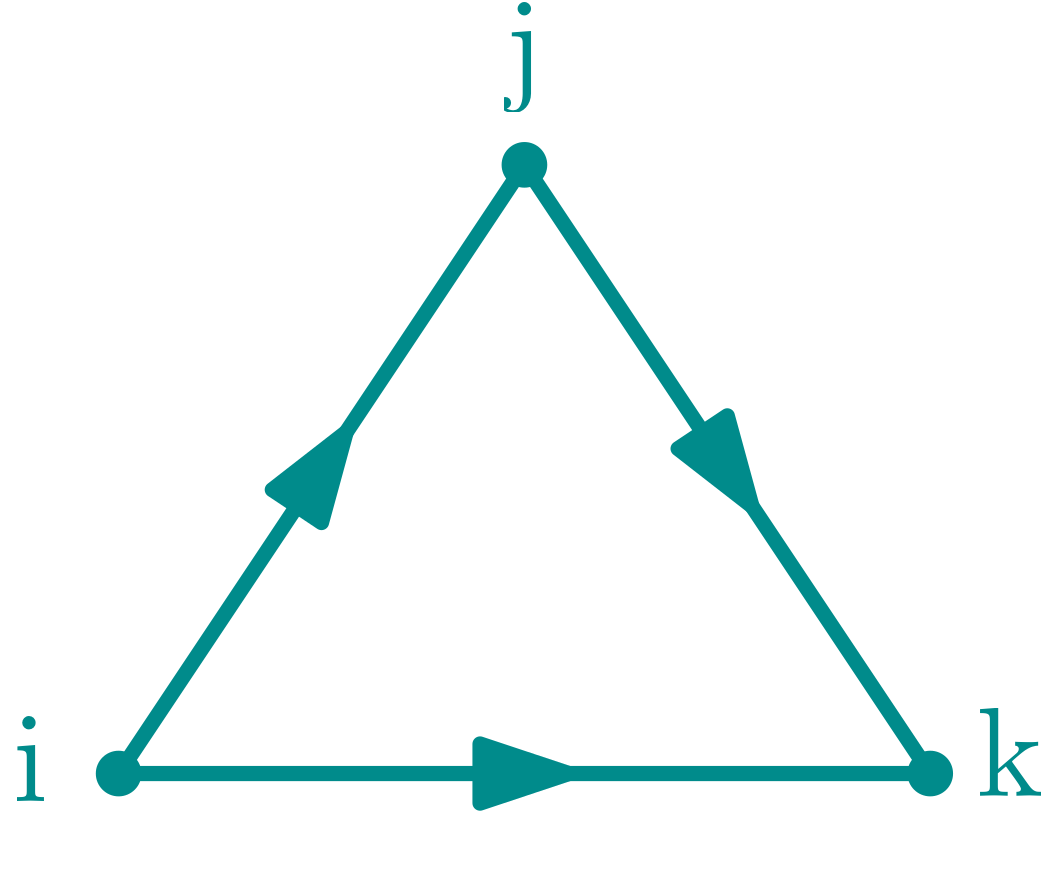}
    \caption{Triangle}
    \label{fig:placeholder}
    \end{subfigure}
    \hfill
    \begin{subfigure}{0.3\textwidth}
        \centering
        \includegraphics[width=0.5\linewidth]{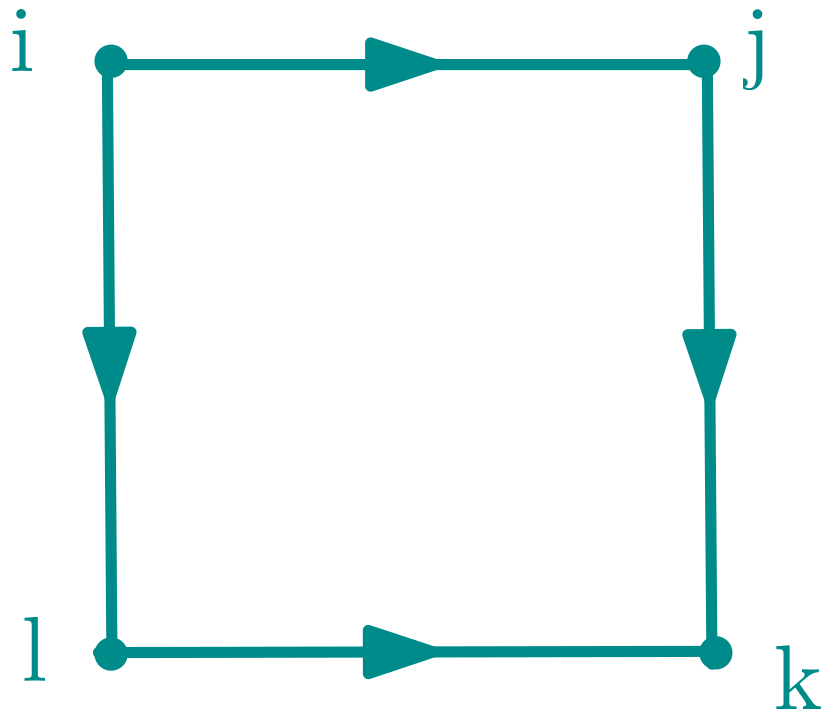}
    \caption{Square}
    \label{fig:placeholder}
    \end{subfigure}
    \caption{The generators of $\Omega_2^N$}
\end{figure}

We have three type of edge compositions in digraph $G$ as 
\begin{itemize}
    \item $e_{i,j,i}$ a double edge in $G$
    \item $e_{i,j,k}+e_{i,k}$ a triangle in $G$
    \item $e_{i,j,k}+e_{i,l,k}$ a square in $G$
\end{itemize}

  It was proven in \cite{homotopytheoryfordigraph} that any element of $\Omega^2_2$ of path complex induced by finite digraph $G$ can be represented as a linear combination of double edge, triangle and square. There were no classification for higher chain structure in \cite{homotopytheoryfordigraph}. We will show that the structure of $\Omega^N_2$ is same with $\Omega^2_2$.

  \begin{theorem}
  \label{thm:omega_2}
      Elements of $\Omega_2^N$ of path complex $P(G)$ induced by finite digraph $G$ can be represented as a linear combination of double edge, triangle and square for $N\geq2$. 
  \end{theorem}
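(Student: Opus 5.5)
The plan is to show that $\Omega_2^N$ coincides, as a subspace of $\mathcal{A}_2$, with the classical space $\Omega_2^2$ of $\partial$-invariant $2$-paths. The decomposition into double edges, triangles and squares then follows from the result of \cite{homotopytheoryfordigraph}. For completeness I would also give the direct decomposition, since it is short.

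\textbf{Step 1: reduce to $q=1$.} By definition, $\Omega_2^N=\bigcap_{q=1}^{N-1}\Omega_2^{N,q}$. For $q\geq 2$, the element $\partial^q v$ lies in $\Lambda_{2-q}$. This space is $\Lambda_0$ when $q=2$, and $\partial^q v=0$ when $q\geq 3$. Since every vertex is allowed, $\mathcal{A}_0=\Lambda_0$, so the conditions for $q\geq 2$ are automatic. Hence
\[
\Omega_2^N=\Omega_2^{N,1}=\{v\in\mathcal{A}_2 \mid \partial v\in\mathcal{A}_1\}.
\]

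\textbf{Step 2: make the condition $\partial v\in\mathcal{A}_1$ explicit.} Write $v=\sum c^{ijk}e_{ijk}$, summing over allowed regular $2$-paths $i\to j\to k$. Then
\[
\partial v=\sum c^{ijk}\bigl(e_{jk}+\xi e_{ik}+\xi^2 e_{ij}\bigr).
\]
The terms $e_{jk}$ and $e_{ij}$ are edges of $G$, hence allowed. The term $e_{ik}$ behaves in one of three ways:
\begin{itemize}
\item If $i=k$, it is irregular and vanishes under the regular boundary.
\item If $i\to k$ is an edge, it is allowed.
\item If $i\neq k$ and $i\to k$ is not an edge, it is non-allowed.
\end{itemize}
Collecting the coefficient of $e_{ik}$ in the third case, $\partial v\in\mathcal{A}_1$ holds if and only if $\xi\sum_j c^{ijk}=0$ for every such pair $(i,k)$. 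Since $\xi\neq 0$, this is equivalent to $\sum_j c^{ijk}=0$. For $N=2$ (where $\xi=-1$) one obtains exactly the same linear conditions. So $\Omega_2^N=\Omega_2^2$ for every $N\geq 2$.

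\textbf{Step 3: decompose.} Split $v$ according to the pair of endpoints $(i,k)$.
\begin{itemize}
\item Terms with $i=k$ are double edges $e_{iji}$.
\item Terms with $i\to k$ an edge are triangles $e_{ijk}$ (with $e_{ik}$ allowed).
\item For a non-adjacent pair $(i,k)$ with intermediate vertices $j_1,\dots,j_m$, the constraint $\sum_r c^{ij_rk}=0$ gives
\[
\sum_r c^{ij_rk}e_{ij_rk}=\sum_{r\geq 2}c^{ij_rk}\bigl(e_{ij_rk}-e_{ij_1k}\bigr).
\]
Each summand is a square, up to the sign convention used in the statement.
\end{itemize}

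The only step that needs care is Step 2. One must confirm that the $\xi$-weights do not create new cancellations or obstructions compared with $N=2$. The key point is that every non-allowed face $e_{ik}$ of a $2$-path appears only in the middle position, with the single uniform weight $\xi$. The conditions are therefore a nonzero scalar multiple of the classical ones. The irregular case $i=k$ also needs a remark: it is the regular quotient, not any property of $\xi$, that kills $e_{ii}$.
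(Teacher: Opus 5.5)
Your proof is correct and follows the paper's overall structure, but it handles the key step differently.

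\textbf{Shared structure.} Both arguments reduce to the condition $\partial v\in\mathcal{A}_1$; the paper likewise notes that $\partial^q$ for $q\geq 2$ lands in $\mathcal{A}_0$ or vanishes. Both then split $v$ by its endpoint pair $(i,k)$, observe that only the middle face $e_{ik}$ of an allowed $2$-path can be non-allowed, and rewrite what survives as squares.

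\textbf{The paper's key step.} The paper lists two ways a missing edge $e_{ik}$ can cancel. One is pairing $e_{ijk}$ with $-e_{imk}$. The other is the ``Mayer type square form'' $e_{ijk}+\sum_{m=1}^{N-1}\xi^m e_{ij_mk}$, which uses $1+\xi+\cdots+\xi^{N-1}=0$. The paper then checks that this second form is a combination of squares. It does not explain why these two patterns cover every coefficient vector for which $e_{ik}$ cancels. For example, when $N=3$ the element $e_{ij_1k}+2e_{ij_2k}-3e_{ij_3k}$ lies in $\Omega_2^N$ but is neither a square nor a Mayer type form.

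\textbf{Your key step.} You write down the exact constraint instead. The coefficient of a non-allowed $e_{ik}$ in $\partial v$ is $\xi\sum_j c^{ijk}$, so the condition is $\sum_j c^{ijk}=0$. This buys three things:
\begin{itemize}
\item It treats every coefficient vector at once.
\item It shows the root-of-unity cancellation is just one particular solution of the classical constraint, not a new mechanism.
\item It gives the sharper statement $\Omega_2^N=\Omega_2^2$ as subspaces of $\mathcal{A}_2$.
\end{itemize}
Your telescoping identity then completes the decomposition without relying on the classical result.

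\textbf{Comparison.} The paper's case analysis mainly serves as motivation, pointing to where a Mayer-specific generator might have been expected. Your linear-algebraic formulation is the more complete proof of the theorem as stated.
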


  \begin{proof}
      Let $v\in\Omega^N_2$, then it is a $\C$-linear combination of elementary paths of the form $e_{i,j,k}$. If $i=k$, we have double edge. If $i\neq k$ and $e_{i,k }\in \mathcal{A}_1(P(G))$, then it will form a triangle.

      Now assume that $e_{i,k}\not\in \mathcal{A}_1(P(G))$. We have $\partial(e_{i,j,k})=e_{i,j}+\xi e_{i,k}+\xi^2 e_{j,k}$ where $\xi $ is $N$th root of unity.

    Lets investigate which scenarios will result in the appearance of $e_{i,k}$ in the image of the boundary map. $e_{i,k,m},e_{i,m,k},e_{m,i,k}$ for some $m$ are the only three options that $e_{i,k}$ appears in the image. Since $e_{i,k}\not\in \mathcal{A}_1(P(G))$, the only possibility is $e_{i,m,k}$

    We can be cancel $\xi e_{i,k}$ in the image by two methods. First one is by negation version $-\xi e_{i,k}$. This means $-e_{i,m,k} $ must appear in $v$ for some $m$. This is the same case explained in \cite[Proposition 2.9]{homotopytheoryfordigraph} that results in a square.

    The other method unique to our definition of the Mayer boundary map is that the property $1+\xi+\cdots +\xi^{N-1}=0$ may appear as a coefficient. Let $v= e_{i,j,k}+\sum_{m=1}^{N-1}\xi^m e_{i,j_m,k}$ where $j_m\neq j$ for at least one $m\in\{1,\cdots,N-1\}$ be called mayer type square form.
    \[\partial(e_{i,j,k}+\sum_{j=1}^{N-1}\xi^{j} e_{i,j_m,k})=\partial(e_{i,j,k})+\sum_{j=1}^{N-1}\xi^{j} \partial(e_{i,j_m,k})=\sum_{j=0}^{N-1}\xi^{j} e_{i,k}+w=w\ \ \ , w\in \mathcal{A}_1(P(G)) \]

     There is no need to check $\partial^q$, where $1\leq q\leq N-1$ since the image is in either $\mathcal{A}_0(P(G))$ or $0$. Observe that $e_{i,j,k}-e_{i,j_m,k}$ is also a square element for each $j_m\neq j$ and $\sum_{j_n=j}\xi^ne_{i,j,k}$ is missing $\xi^m$ coefficients where $j_m\neq k$, thus by using $\sum_{i=0}^{N-1}\xi^i=0$ the sum can be rewritten as $\sum_{j_m\neq j}-\xi^me_{i,j,k}$. The element $v$ can be represented as follows

     \[v=\sum_{j_n=j}\xi^ne_{i,j,k}+\sum_{j_m\neq j}\xi^me_{i,j_m,k}=\sum_{j_m\neq j}-\xi^me_{i,j,k}+\sum_{j_m\neq j}\xi^me_{i,j_m,k}=\sum_{j_m\neq j}-\xi^m(e_{i,j,k}-e_{i,j_m,k})\]
 $e_{i,j,k}+\sum_{j=1}^{N-1}\xi^{j} e_{i,j_m,k}$ is a linear combination of the squares $e_{i,j,k}-e_{i,j_m,k}$. The new cancellation method will not produce a independent generator.

  \end{proof}

\subsubsection{$\partial $-invariant $3$-path $\Omega_3^{N}$}

Grigoryan constructed a basis for $\Omega_3^2$ in \cite{GrigoryanSurvey} under certain conditions such as free of double edge and multisquares. Later Li-Shen removed these assumptions and provided a general construction of a basis \cite{constructionomega3basis}.

\begin{definition}
    A p-path $v=\sum v^{i_0,\cdots,i_p}e_{i_0,\cdots,i_p}$ is called $(a,b)$-cluster if all the elementary paths $e_{i_0,\cdots,i_p}$ with nonzero coefficients have $i_0=a$ and $i_p=b$. A path is called cluster if it is a $(a,b)$-cluster for some $a,b\in V(G)$.
\end{definition}

It is known that any $p$-path in $\Omega_p^2$ is a sum of $\Omega_p^2$ clusters \cite{GrigoryanSurvey} . We extend this to Mayer setting.
\begin{lemma}
    Any element $v\in \Omega_n^{N,1}$ is a sum of $(a,b)$-clusters $v_{a,b}\in  \Omega_n^{N,1}$.
\end{lemma}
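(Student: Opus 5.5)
Write $v=\sum_{a,b\in V} v_{a,b}$, where $v_{a,b}$ collects exactly those elementary paths $e_{i_0\cdots i_n}$ in the expansion of $v$ with $i_0=a$ and $i_n=b$. This decomposition is unique, each $v_{a,b}$ is an $(a,b)$-cluster, and each $v_{a,b}\in\mathcal{A}_n$ because $v\in\mathcal{A}_n$. So the whole content of the lemma is to show $\partial v_{a,b}\in\mathcal{A}_{n-1}$ for every pair $(a,b)$. Here $\partial$ is the regular Mayer boundary with coefficients $\xi^j$.

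The key step is to split $\partial = \partial_0 + \xi^n\partial_n + \sum_{j=1}^{n-1}\xi^j\partial_j$, separating the deletion of the first vertex, the deletion of the last vertex, and the interior deletions.

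For the interior deletions, note that $\partial_j e_{i_0\cdots i_n}$ with $1\le j\le n-1$ keeps both endpoints $a=i_0$ and $b=i_n$. Hence $\sum_{j=1}^{n-1}\xi^j\partial_j v_{a,b}$ is again an $(a,b)$-cluster, or is $0$ after irregular terms are discarded.

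For the end deletions, every elementary path in $\partial_0 v_{a,b}$ and in $\partial_n v_{a,b}$ is automatically allowed, since $P$ is a path complex and is closed under removing the first or last vertex. So these terms already lie in $\mathcal{A}_{n-1}$.

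It remains to show that the interior part $\sum_{j=1}^{n-1}\xi^j\partial_j v_{a,b}$ lies in $\mathcal{A}_{n-1}$.
\begin{itemize}
\item Apply $\partial$ to $v$ and observe that $\partial v\in\mathcal{A}_{n-1}$ means every non-allowed elementary $(n-1)$-path has total coefficient $0$ in $\partial v$.
\item The contributions $\partial_0 v$ and $\partial_n v$ are allowed, so every non-allowed term of $\partial v$ comes only from $\sum_{a,b}\sum_{j=1}^{n-1}\xi^j\partial_j v_{a,b}$.
\item Within that sum, a given elementary path $e_{c\cdots d}$ can come only from the cluster with $(a,b)=(c,d)$, because interior deletions preserve endpoints.
\item Therefore the coefficient of each non-allowed path in $\sum_j \xi^j\partial_j v_{c,d}$ equals its coefficient in $\partial v$, which is $0$.
\end{itemize}
This gives $\sum_j \xi^j\partial_j v_{a,b}\in\mathcal{A}_{n-1}$, and hence $\partial v_{a,b}\in\mathcal{A}_{n-1}$, that is, $v_{a,b}\in\Omega_n^{N,1}$.

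The only subtle point is regularity, since the boundary is taken modulo irregular paths. Deleting an interior vertex can create an irregular path, for example when $i_{j-1}=i_{j+1}$. Such terms are set to zero in every piece, so they affect neither the endpoint bookkeeping nor the comparison of coefficients.

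Note that the $\xi$-weights play no role in the argument: only the endpoint-preservation property of interior faces is used. This is why the proof is stated for $q=1$. For $q\ge2$ the same separation is less clean, because $\partial^q$ mixes end and interior deletions. That case would be the main obstacle for any extension, but it is not needed here.
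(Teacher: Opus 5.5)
Your proof is correct and takes essentially the paper's approach. You decompose $v$ by endpoints, note that first- and last-vertex deletions are always allowed, and use the fact that interior deletions preserve the endpoints $(a,b)$, so non-allowed faces must cancel within their own cluster; your coefficient comparison is a cleaner rendering of the paper's argument, which lists two cancellation mechanisms (pairwise $v-v'$ and the root-of-unity sum $v+\sum_m \xi^m v_m$) and notes that both stay inside the cluster.
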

\begin{proof}
    Let $v\in \Omega_n^{N,1}$ than $\partial(v)\in \mathcal{A}_{n-1}$. Observe that $v$ can be written as a sum of $(a,b)$-clusters so that $v=\sum_{a,b\in V(G)}v_{a,b}$ where $v_{a,b}\in\mathcal{A}_p$.
    Observe that,
\[
\partial(e_{a,i_1,\cdots,i_{p-1},b}) 
= e_{i_1,\cdots,i_{p-1},b}+\xi^{p} e_{a,i_1,\cdots,i_{p-1}}
+\sum_{j=1}^{p-1}
\xi^{j} e_{a,i_1,\cdots,i_j^*\cdots ,i_{p-1},b}.
\]
Observe that $e_{a,i_1,\cdots,i_j^*\cdots ,i_{p-1},b}$ might be non-allowed. The cancellation can occur in two different ways such as $v-v'$ or $v+\sum_{m=1}^{N-1}\xi^m v_m$ where at least one $v_m\neq v$ and $v',v_m$ has the same non-allowed face. Both of the cancellation happens within the cluster since $v'$ and $v_m$ are in $(a,b)$-cluster.
\end{proof}
Let $w\in \Omega^{N,1}_3$ be called minimal element if no sub linear combinations of its components are in $\Omega_3^{N,1}$. 
By [Lemma 2.4 of \cite{GrigoryanSurvey}], every $\Omega_p^{2,1}$-cluster is a sum of minimal $\Omega_p^{2,1}$-clusters which leads to the basis of $\Omega_p^{2,1}$ as stated at Proposition 2.5 at \cite{GrigoryanSurvey}. These results can be stated for $\Omega_p^{N,1}$.
\begin{figure}
    \centering
    \includegraphics[width=0.3\linewidth]{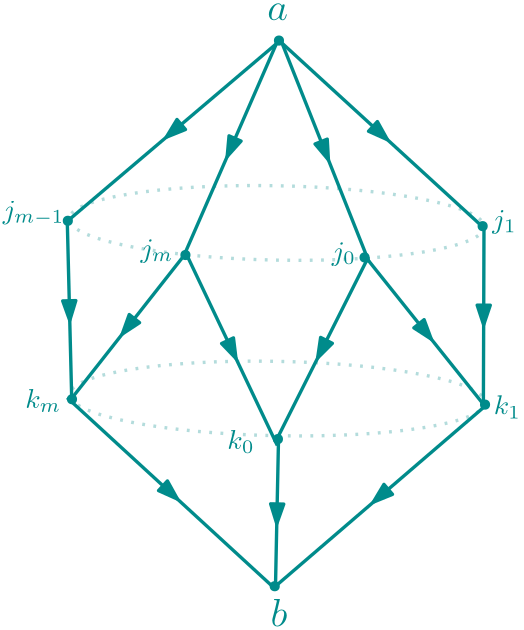}
    \caption{Trapezohedron $T_m$}
    \label{fig:Tm}
\end{figure}
The basis of $\Omega_3^{2}$ contains an core element which is called trapezohedron $T_m$ defined as for $m\geq 2$

\[ e_{a,j_t,k_t,b}+e_{a,j_{t+1},k_{t},b}\]
for all $t=0,\cdots, m-1\pmod m$
as in Fig \ref{fig:Tm}.

For the completeness we will state the results from \cite{GrigoryanSurvey} and \cite{constructionomega3basis}. 
Let $G$ be a directed graph whose vertex set is partitioned into disjoint subsets $A_1, A_2,\cdots , A_n$.
Let $H$ be the directed graph with vertices $a_1, a_2,\cdots , a_n$. Define a map $f : G\to H$ by $f(x) = a_i$ for all $x \in A_i$. If
\[a_i\to a_j \text{ in } H \text{ if and only if there exist } x \in A_i \text{ and } y \in A_j \text{ such that } x \to y \text{ in } G\]
then the map $f$ is called a merging map.

For the next theorem, minimal $\partial$-invariant path means there is no nonempty proper subset of its components whose nonzero linear combination is also $\partial$-invariant. The following theorem was first stated at \cite{GrigoryanSurvey} as Theorem 2.10 with a condition that the digraph will be free of multisquares and double edge. Later at \cite{constructionomega3basis} these conditions are lifted.
\begin{theorem}
    Let $G$ digraph. Every minimal path in $\Omega_3^{2,1}$ is either trapezohedron or a merging image thereof. There is a basis of $\Omega_3^{2,1}$ containing trapezohedral paths $T_m$ with $m\geq 2$ with their merging images. 
    \end{theorem}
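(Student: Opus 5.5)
This statement is quoted from \cite{GrigoryanSurvey} (Theorem 2.10) and \cite{constructionomega3basis}, so the plan is to reconstruct their argument in the classical setting $N=2$. The structure is: first reduce to clusters, then reduce to minimal clusters, and finally classify minimal clusters combinatorially.

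\textbf{Step 1: reduction to minimal clusters.} By the cluster lemma above (with $N=2$), every $v\in\Omega_3^{2,1}$ decomposes as $v=\sum v_{a,b}$ with each $v_{a,b}$ an $(a,b)$-cluster in $\Omega_3^{2,1}$. Every cluster is in turn a sum of minimal clusters (Lemma 2.4 of \cite{GrigoryanSurvey}). The argument is linear-algebraic: if a cluster $w$ is not minimal, choose a $\partial$-invariant combination $u$ on a proper subset of its support. Subtracting a suitable multiple of $u$ kills one coefficient of $w$, which reduces the support size, and we induct. Hence it suffices to show that every minimal $(a,b)$-cluster $w\in\Omega_3^{2,1}$ is a trapezohedron $T_m$ or a merging image of one. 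A basis can then be extracted from the spanning set of such minimal elements.

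\textbf{Step 2: local structure of a minimal cluster.} Write $w=\sum c_{j,k}\,e_{a,j,k,b}$. The boundary terms $e_{j,k,b}$ and $e_{a,j,k}$ are allowed. The inner faces $e_{a,k,b}$ and $e_{a,j,b}$ may be non-allowed, and then they must cancel within the cluster. Regardless of allowedness, consider the bipartite incidence graph $\Gamma$ whose vertices are the pairs $(j,k)$ in the support, where $(j,k)$ is joined to $(j',k')$ if $k=k'$ with $a\not\to k$, or $j=j'$ with $j\not\to b$. Cancellation of a non-allowed face $e_{a,k,b}$ forces $\sum_j c_{j,k}=0$, and similarly $\sum_k c_{j,k}=0$ for $e_{a,j,b}$.

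Minimality implies that $\Gamma$ is connected. It also implies that the solution space of these linear conditions restricted to the support is one-dimensional, since otherwise a combination could cancel one coefficient. A degree count then finishes the local analysis. Each non-allowed face must be hit by at least two support terms. With one-dimensionality, this forces each constrained index $j$ or $k$ to appear in exactly two terms with opposite signs, so the support forms an alternating cycle $j_0k_0j_1k_1\cdots j_{m-1}k_{m-1}$. This is the trapezohedron $T_m$, with coefficients $\pm1$ after normalizing signs.

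\textbf{Step 3: the degenerate cases, which I expect to be the main obstacle.} The difficulty lies in handling degeneracies: coincidences $j_s=k_t$, $j=b$, $k=a$ (double edges), allowed inner faces that remove a constraint, and multisquares. The plan is to show that each such coincidence amounts to identifying vertices of $T_m$, which is exactly a merging map $T_m\to G$ restricted to the cycle. The paths $e_{a,j,k,b}$ are allowed, so edges are preserved. A dropped constraint corresponds to an edge $a\to k$ present in the image, which merging permits.

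I would proceed by induction on the number of identifications, checking that the constraint system stays one-dimensional. The worry is that merging could produce new cancellations giving non-alternating supports. For example, a vertex serving as both $j$ and $k$ could make $e_{a,j,j,b}$ irregular and hence zero, and this is precisely where \cite{constructionomega3basis} needed extra care. I would handle it by treating regular-vanishing faces as automatically allowed, so that such a face contributes no constraint.
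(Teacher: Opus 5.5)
Your Step 1 and the rank argument in Step 2 are sound, and they follow the cluster-graph strategy the paper sketches for this cited result. The gap is the conclusion of Step 2, ``so the support forms an alternating cycle''. It holds only if every index in the support is constrained, i.e.\ $a\not\to k$, $k\neq a$ for all $k$, and $j\not\to b$, $j\neq b$ for all $j$. Once some inner face is allowed, or vanishes by regularity, minimal elements need not be cycles:
\begin{itemize}
\item a single term $e_{a,j,k,b}$ with $a\to k$ and $j\to b$ is minimal in $\Omega_3^{2,1}$;
\item so is $e_{a,j,k,b}-e_{a,j',k,b}$ with $j\to b$, $j'\to b$, $a\not\to k$.
\end{itemize}
Your degree count only shows that \emph{constrained} indices occur exactly twice; unconstrained indices can occur once, as endpoints. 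This is the ``line'' (maximal alternating path) half of the polygon/line dichotomy the paper describes. It is a second main family, not a degeneracy of $T_m$, and Step 3 does not reach it. No $T_m$ sits in $G$ whose vertices you could identify, so an induction on the number of identifications has nothing to start from. Likewise, ``a dropped constraint corresponds to an edge $a\to k$'' covers only the harmless case of an extra shortcut added to an existing cycle.

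Here is the missing argument.
\begin{enumerate}
\item Work with the bipartite graph on indices, with $j$-copies and $k$-copies as vertices and support terms as edges. Your $\Gamma$ on pairs is not bipartite: three pairs sharing a constrained $k$ form a triangle.
\item Identify all unconstrained indices to a single vertex. The constraint system on a support is then the cycle space of this quotient. Hence a minimal support is a simple cycle there: either an alternating cycle (with at most one unconstrained index), or an alternating path whose two end indices are unconstrained and whose interior indices are constrained.
\item In the cycle case almost no identification can occur. A constrained $j$ has $j\not\to b$ while every $k$ has $k\to b$, and dually, so $j_s=k_t$ is impossible. Multisquares play no role. Only $a=b$, or $j=b$, $k=a$ at the unconstrained index, remain.
\item In the path case you must construct the trapezohedral preimage explicitly, by closing the path with auxiliary vertices and then collapsing them.
\begin{itemize}
\item For $j_0k_0\cdots j_{m-1}k_{m-1}$ with $j_0\to b$ and $a\to k_{m-1}$: take $T_{m+1}$, merge $j_m$ into $a$ and $k_m$ into $j_0$. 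The edges $j_m\to k_{m-1}$ and $k_m\to b$ become the shortcuts $a\to k_{m-1}$ and $j_0\to b$, and the three extra terms become irregular.
\item For two $j$-ends: merge $k_{m-1}$ of $T_m$ into $b$. For two $k$-ends, do the dual.
\item A vertex used both as a $j$ and as a $k$ can only be an end of a line. It is handled by merging the two corresponding vertices of the trapezohedron.
\end{itemize}
\end{enumerate}
Without this construction, the first assertion of the theorem is not established for the line family.
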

    The construction of the basis rely on the cluster-graph conversion. Let $V$ be the vertex set of all elementary path $v=e_{a,i,j,b}$ from $(a,b)$-cluster. Construct a graph $\Gamma$ with $V=V(\Gamma)$ and the edge set is 
    \begin{itemize}
        \item the edge between $e_{a,i,j,b}$ and $e_{a,i',j,b}$ is colored $1$
        \item the edge between $e_{a,i,j,b}$ and $e_{a,i,j',b}$ is colored $2$
    \end{itemize}
    Let $E_i$ be the set of color $i$ edges where $i=1,2$.

    In the proof of basis for $\Omega_3^{2,1}$ at \cite{GrigoryanSurvey}, no multisquares imply that $\Gamma$ can be either a polygon or a line which both case describe a class of generators. Later in \cite{constructionomega3basis}, this condition is eliminated by considering the cycles and maximal alternating path on $\Gamma$. 

    In the following work, we will also label the vertices in $\Gamma$ which create a finer classification components that is needed to discover $\Omega_3^N$.

To organize the possible $3$-path contributions, we assign to each $v=e_{i,j,k,l}\in\mathcal{A}_3$ its image-type
\[
\phi(v)=(\phi(e_{j,k,l}),\phi(e_{i,k,l}),\phi(e_{i,j,l}),\phi(e_{i,j,k}))\in\{T,S,W,N_w\}^4= \Sigma^4,
\]
where $\Sigma = \{T,S,W,N_w\}$ denote triangle, square, admissible, and non-admissible types, respectively. 
The next theorem shows that only a small subcollection can occur among components of $\Omega^{N,1}_3$. 

The indicator maps which record where the non-admissible faces occur in the image-type of a $3$-path defined as $\pi_r:\Sigma^4\to\{0,1\}$ by where $r=2,3$
\[
\pi_i(\alpha_1,\alpha_2,\alpha_3,\alpha_4)=
\begin{cases}
1,& \alpha_i=N_w,\\
0,& \text{otherwise},
\end{cases}
\]
For $v\in\mathcal{A}_3$ we also write $\pi_r(v):=\pi_r(\phi(v))$ for $r=2,3$.

\begin{lemma}\label{lemma:Nw-cancellation}
   Let $v\in\Omega_3^{N,1}$ be a cluster element where
$v=\sum_{i=1}^k \alpha_i v_i$ with $v_i=e_{a,j_i,k_i,b}\in\mathcal{A}_3$. 
If $\pi_2(v_i)=1$, then the fourth entry of $\phi(v)$ is of square type,
i.e.\ $\phi(e_{a,j_i,k_i})=S$.
Dually, if $\pi_3(v_i)=1$, then the first entry of $\phi(v)$ is of square type,
i.e.\ $\phi(e_{j_i,k_i,b})=S$.

\end{lemma}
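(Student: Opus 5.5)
The plan is to unpack what $\pi_2(v_i)=1$ means in terms of edges of $G$, and then use the $\partial$-invariance of $v$ to find a second elementary path in the cluster that completes a square. Fix $v_i=e_{a,j_i,k_i,b}$ with nonzero coefficient $\alpha_i$. Since $v_i$ is allowed, $a\to j_i\to k_i\to b$ are edges of $G$. Its faces are
\[
\partial(v_i)=e_{j_i,k_i,b}+\xi e_{a,k_i,b}+\xi^2 e_{a,j_i,b}+\xi^3 e_{a,j_i,k_i}.
\]
The condition $\pi_2(v_i)=1$ says that $e_{a,k_i,b}$ is non-admissible. I will first rule out $a=k_i$: in that case $e_{a,k_i,b}$ is irregular, so it vanishes under the regular boundary and is not of type $N_w$. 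So $a\neq k_i$. Since $k_i\to b$ is an edge, non-admissibility of $e_{a,k_i,b}$ forces $a\not\to k_i$.

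Consequently the face $e_{a,j_i,k_i}$ is an allowed $2$-path whose endpoints are distinct and not joined by an edge. It is therefore neither a double edge nor a triangle. What remains is to show it actually sits in a square, i.e.\ that there is a second path $a\to m\to k_i$ with $m\neq j_i$ contributing to $v$.

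For this I use $\partial v\in\mathcal{A}_2$: the total coefficient of $e_{a,k_i,b}$ in $\partial v$ must vanish. I will list which elementary $3$-paths in the $(a,b)$-cluster can produce $e_{a,k_i,b}$ as a face. These are $e_{a,m,k_i,b}$, deleting index $1$ with coefficient $\xi$, and $e_{a,k_i,m,b}$, deleting index $2$ with coefficient $\xi^2$. The second kind is not allowed, because it needs the edge $a\to k_i$. Hence the coefficient of $e_{a,k_i,b}$ in $\partial v$ is $\xi\sum_{m}\alpha_{(m)}$, summed over the paths $e_{a,m,k_i,b}$ in $v$. Its vanishing, with $\alpha_i\neq0$, forces at least one $m\neq j_i$ with $e_{a,m,k_i,b}$ in $v$. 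This argument covers both the ordinary cancellation $v_i-v'$ and the Mayer-type cancellation through $\sum_{s=0}^{N-1}\xi^s=0$; the latter can be rewritten as a combination of differences, exactly as in the proof of Theorem \ref{thm:omega_2}. Then $e_{a,j_i,k_i}$ and $e_{a,m,k_i}$ are two paths from $a$ to $k_i$ with $a\not\to k_i$, so $\phi(e_{a,j_i,k_i})=S$.

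The dual statement is proved by the symmetric argument. If $\pi_3(v_i)=1$, then $e_{a,j_i,b}$ is non-admissible. Using $a\to j_i$, this gives $j_i\neq b$ and $j_i\not\to b$. The only allowed cluster paths with face $e_{a,j_i,b}$ are $e_{a,j_i,m,b}$, deleting index $2$, since $e_{a,m,j_i,b}$ would need $j_i\to b$. Cancellation then yields $m\neq k_i$, and $e_{j_i,k_i,b}$ together with $e_{j_i,m,b}$ forms a square.

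I expect the main obstacle to be the careful bookkeeping. One point is checking that no other face index (such as index $0$ or $3$) can produce $e_{a,k_i,b}$ inside an $(a,b)$-cluster: deleting index $0$ changes the start away from $a$, and deleting index $3$ changes the end away from $b$, so neither can. The other point is that the coefficient sum argument does not secretly allow self-cancellation by $v_i$ alone; it cannot, because distinct elementary paths are linearly independent.
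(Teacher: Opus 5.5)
Your proposal is correct and follows the same route as the paper. The non-admissible face $e_{a,k_i,b}$ (resp.\ $e_{a,j_i,b}$) must cancel in $\partial v$, which forces a second component $e_{a,m,k_i,b}$ (resp.\ $e_{a,j_i,m,b}$) with $m\neq j_i$ (resp.\ $m\neq k_i$) in the same cluster, and the two resulting $2$-paths form a square; your explicit coefficient count, your exclusion of the other face indices and of the non-allowed $e_{a,k_i,m,b}$, and your handling of the degenerate case $a=k_i$ spell out what the paper's terse argument leaves implicit (that argument also writes $\pi_1$ where $\pi_2$ is meant).
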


\begin{proof}

Assume $v_i=e_{a,j_i,k_i,b}$ contains a non-admissible face. Observe that this can appear only at $e_{a,k_i,b}$ or $e_{a,j_i,b}$.
For $v\in\Omega^{N,1}_3$, every non-admissible term in $\partial( v)$ must be eliminated by pairing with another term of the same key. Such elimination can only occur through linear combination $3$-path that has the same non-admissible term in the same cluster. 

If $\pi_1(v_i)=1$, then $e_{a,k_i}\not\in \mathcal{A}_1$. Since non-admissible term is canceled, we know that there exists at least one $v_t\neq v_i$ so that $v_t= e_{a,j_t,k_t=k_i,b}$. Thus $e_{a,j_t,k_t=k_i}-e_{a,j_i,k_i}$ forms a square. The argument is analogous for $\pi_2(v_i)=1$.

\end{proof}

The above lemma shows that cancellation of non-admissible faces is highly constrained: a non-admissible face can only be eliminated by pairing with another $3$-path that shares the same face, and such a pairing necessarily produces a square-type configuration.

In the construction of $\Gamma$, this cancellation mechanism is encoded globally by edges in the graph $\Gamma$, and generators arise from alternating paths and cycles in $\Gamma$. In our formulation, this same cancellation mechanism is encoded locally through the image-type $\phi(v)$ and the indicator maps $\pi_2,\pi_3$.

Therefore, instead of searching for alternating paths in $\Gamma$, we may identify generators by determining which collections of $3$-path types can be connected through successive cancellations of non-admissible faces.

\begin{theorem}
\label{thm:C_N1_types}
    Let $\mathcal{C}_{N,1}$ be the set of all $3$-paths components that occur in minimal cluster elements of $\Omega_3^{N,1}$.
    \[\mathcal{C}_{N,1}=\{v\in\mathcal{A}_3|\ \exists \text{
    minimal cluster }w\in \Omega^{N,1}_3 \text{ such that } w=\alpha_v v+\sum\alpha_{v_i} v_i\ ,  v_i\in \mathcal{A}_3\}\] 
    
    Let $\phi:\mathcal{C}_{N,1}\to \{T,S,W,N_w\}^4$ be the image-type map. Then, for $N\ge 2$,
\[
\phi(\mathcal{C}_{N,1})=\{(T,T,T,T),(T,S,S,T)\}\ \cup\ \mathcal{L},
\]
where $\mathcal{L}$ consists of exactly seven patterns, namely
\[
(S,T,N_w,T),\ (S,S,N_w,T),\ (S,W,N_w,T),\ (T,N_w,T,S),\ (T,N_w,S,S),\ (T,N_w,W,S),\ (S,N_w,N_w,S).
\]
In particular, $|\phi(\mathcal{C}_{N,1})|=9$.
\end{theorem}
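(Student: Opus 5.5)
We have to show two things: every image type that occurs in $\mathcal{C}_{N,1}$ lies in the displayed list of nine patterns, and every one of the nine patterns actually occurs. For the first part, take $v=e_{a,j,k,b}$ a component of a minimal cluster $w\in\Omega_3^{N,1}$ and analyse the four faces of $v$.

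\textbf{The outer faces.} The faces $e_{j,k,b}$ and $e_{a,j,k}$ are always allowed $2$-paths, since $v$ is allowed and $P(G)$ is a path complex. They cannot be non-admissible. Their types are determined as follows.
\begin{itemize}
\item If $e_{j,b}$ is an edge, then $e_{j,k,b}$ is of type $T$.
\item If not, the non-allowed $e_{j,b}$ appears in $\partial^2$. The argument of Lemma~\ref{lemma:Nw-cancellation}, applied to the middle face $e_{a,j,b}$, then forces $e_{j,k,b}$ to sit in a square with another component sharing that face, so the type is $S$.
\end{itemize}
So the first and last entries of $\phi(v)$ lie in $\{T,S\}$. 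This will be made precise by the observation that $W$ (admissible but neither triangle nor square) can only occur for an inner face.

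\textbf{The inner faces.} The inner faces $e_{a,k,b}$ and $e_{a,j,b}$ are exactly those that can be $N_w$. Lemma~\ref{lemma:Nw-cancellation} gives the coupling: $\pi_3(v)=1$ forces the first entry to be $S$, and $\pi_2(v)=1$ forces the fourth entry to be $S$. Otherwise an inner face is allowed and is of type $T$, $S$ or $W$, depending on whether $e_{a,b}$ is an edge or whether the face is cancelled through a square. We then enumerate cases by $(\pi_2,\pi_3)\in\{0,1\}^2$.
\begin{itemize}
\item $(\pi_2,\pi_3)=(0,0)$: the inner faces are allowed, and minimality should force either a full tetrahedron $(T,T,T,T)$ (when $e_{a,b}$ is an edge) or the trapezohedral shape $(T,S,S,T)$, as in the $N=2$ classification by Grigor'yan and Li--Shen.
\item $(\pi_2,\pi_3)=(0,1)$: the first entry is $S$, and the third is $N_w$ in the paper's indexing. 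The fourth entry must be $T$, otherwise we fall into another case. The second entry ranges over $\{T,S,W\}$. This gives three patterns.
\item $(\pi_2,\pi_3)=(1,0)$: this is the dual, again giving three patterns.
\item $(\pi_2,\pi_3)=(1,1)$: both inner faces are $N_w$, so the outer faces are both $S$, giving $(S,N_w,N_w,S)$.
\end{itemize}

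\textbf{The Mayer cancellation.} The extra cancellation mechanism specific to $N\ge3$, namely $\sum_{m}\xi^m=0$, has to be ruled out as a source of new types. We reuse the trick from the proof of Theorem~\ref{thm:omega_2}: a combination $\sum\xi^m v_m$ sharing a non-allowed face is rewritten as a combination of pairwise differences $v_0-v_m$. Each component of a minimal element therefore still cancels against a partner sharing that face, and the same square-forming conclusion holds. Hence the image types are the same as for $N=2$.

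\textbf{Realisation.} For each of the nine patterns we exhibit a small digraph in which the pattern occurs inside a minimal cluster.
\begin{itemize}
\item $(T,T,T,T)$: a transitive tournament $e_{a,j,k,b}$.
\item $(T,S,S,T)$ and $(S,N_w,N_w,S)$: trapezohedra $T_m$.
\item The six mixed patterns: merging images of trapezohedra in which some edge $a\to k$ or $j\to b$ exists or is absent.
\end{itemize}
These examples are then checked to lie in $\Omega_3^{N,1}$.

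\textbf{Main obstacle.} The hardest step is exclusion within the case analysis. We must show that patterns such as $(T,W,W,T)$, $(S,S,S,S)$ or $(W,\cdot,\cdot,\cdot)$ cannot occur in a minimal element. Here we would use minimality directly: if an outer or inner face is of type $W$ or unpaired, then the component can be split off, or the remaining sum is already $\partial$-invariant, contradicting minimality. I expect the delicate part to be justifying why $W$ appears only in the mixed patterns, and only opposite an $N_w$.
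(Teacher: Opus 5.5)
Your upper-bound argument is essentially the paper's. Your split on $(\pi_2,\pi_3)$ is the paper's split on whether $e_{a,k}$ and $e_{j,b}$ are edges, refined by whether $e_{a,b}$ is an edge, and in both the exclusions come from Lemma~\ref{lemma:Nw-cancellation}. Your realisation step goes beyond the paper, whose proof only establishes the inclusion.

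\textbf{The exclusion step.} The tool you propose for the ``main obstacle'' would fail. A component $v$ with no $N_w$ face has $\partial v\in\mathcal{A}_2$, so it is a minimal cluster on its own. Minimality therefore can never exclude a pattern such as $(T,W,W,T)$, and ``minimality should force'' in the case $(0,0)$ is not a justification either. The exclusions are purely combinatorial, because the face types are determined by the digraph:
\begin{itemize}
\item Since $a\to j\to k\to b$, the first entry is $T$ iff $j\to b$ iff $\pi_3(v)=0$, and the fourth is $T$ iff $a\to k$ iff $\pi_2(v)=0$. So the lemma turns every non-$T$ outer entry into $S$, which removes $(W,\cdot,\cdot,\cdot)$, $(S,S,S,S)$ and similar patterns.
\item If $\pi_2=\pi_3=0$, then $e_{a,j,b}$ and $e_{a,k,b}$ are two distinct allowed $2$-paths from $a$ to $b$. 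Hence they are both $T$ (if $a\to b$) or both $S$, never $W$.
\item The same remark shows that an inner $W$ forces the other inner face to be $N_w$.
\end{itemize}
The $\sum_m\xi^m=0$ detour is also unnecessary. The face $e_{a,k,b}$ arises in $\partial w$ only from components $e_{a,j',k,b}$, always with coefficient $\xi$, so its cancellation reads $\sum_{j'}\alpha_{a,j',k,b}=0$, which directly yields a partner. Likewise $\partial^2$ plays no role for $\Omega_3^{N,1}$.

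\textbf{The realisation step.} $(T,S,S,T)$ is not produced by a trapezohedron: in the digraph of $T_m$ every component has type $(S,N_w,N_w,S)$. A $(T,S,S,T)$ component, having no $N_w$ face, is its own minimal cluster. The witness is $e_{1,2,3,4}$ of Example~\ref{diamond_example}, which lies in $\Omega_3^{N,1}$ for every $N$; for $N\ge3$ it fails only the $q=2$ condition. (The paper's remark that this configuration needs extra vertices when $N\ge 3$ conflates $\Omega_3^{N,1}$ with $\Omega_3^{N}$.)

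The mixed patterns also need explicit witnesses rather than an appeal to merging images. For example, take the edges $a\to j$, $a\to k$, $j\to k$, $j\to k'$, $k\to k'$, $k\to b$, $k'\to b$. Then $e_{a,j,k,b}-e_{a,j,k',b}+e_{a,k,k',b}$ is a minimal cluster whose components have types $(S,W,N_w,T)$, $(S,N_w,N_w,S)$ and $(T,N_w,W,S)$.
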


\begin{proof}
Let $v\in \mathcal{C}_{N,1}$. Then $v=e_{i,j,k,l}\in\mathcal{A}_3$ occurs as a component of a minimal
$w\in\Omega^{N,1}_3$. By definition,
\[
\phi(v)=\big(\phi(e_{j,k,l}),\ \phi(e_{i,k,l}),\ \phi(e_{i,j,l}),\ \phi(e_{i,j,k})\big)\in\Sigma^4,
\qquad \Sigma=\{T,S,W,N_w\}.
\]

The type of each $2$-face is determined by the existence of the shortcut edges
$e_{i,k}$, $e_{j,l}$, and $e_{i,l}$. We distinguish cases according to membership of these edges in
$\mathcal{A}_1$.

\begin{itemize}
\item If $e_{i,k},e_{j,l},e_{i,l}\in\mathcal{A}_1$, then every $2$-face is triangular and hence
$\phi(v)=(T,T,T,T)$, in which case $v\in\Omega^{N,1}_3$ for all $N\ge2$.

\item If $e_{i,k},e_{j,l}\in\mathcal{A}_1$ but $e_{i,l}\notin\mathcal{A}_1$, then for $N=2$ the only
possible configuration on four vertices produces the internal square and yields
$\phi(v)=(T,S,S,T)$ which implies $v\in \Omega_3^{2,1}$. For $N\ge3$ this configuration cannot be realized using only the four vertices
$\{i,j,k,l\}$ (see example \ref{diamond_example}) but with additional vertices that will create squares with $e_{i,*,k}$; the remaining possibility on four vertices would force the pattern $(T,W,W,T)$,
does not belong to $\Omega^{N,1}_3$. Thus in this case the only pattern in $\mathcal{C}_{N,1}$ is
$(T,S,S,T)$  for $N\geq 2$.

\item If $e_{i,k}\in\mathcal{A}_1$ and $e_{j,l}\notin\mathcal{A}_1$, then the third face
$e_{i,j,l}$ is non-admissible, so $\pi_3(v)=1 $, and one obtains
\[
\phi(v)\in \{S,W\}\times\{T,S,W\}\times\{N_w\}\times\{T\}.
\]

\item If $e_{j,l}\in\mathcal{A}_1$ and $e_{i,k}\notin\mathcal{A}_1$, then the second face
$e_{i,k,l}$ is non-admissible, so the second coordinate of $\phi(v)$ is $N_w$, and one obtains
\[
\phi(v)\in \{T\}\times\{N_w\}\times\{T,S,W\}\times\{S,W\}.
\]

\item If $e_{i,k},e_{j,l}\notin\mathcal{A}_1$, then the second and third faces are non-admissible and
\[
\phi(v)\in \{S,W\}\times\{N_w\}\times\{N_w\}\times\{S,W\}.
\]
\end{itemize}

So far this yields a finite list of candidate patterns, but most of them are excluded by the
cancellation requirement defining $\Omega^{N,1}_3$ of a simple digraph.

By Lemma \ref{lemma:Nw-cancellation}, all patterns whose second entry is $N_w$ but fourth entry is not $S$, and all patterns whose third entry is $N_w$ but
first entry is not $S$ is not in $\mathcal{C}_{N,1}$.
The surviving patterns are
\[
(S,T,N_w,T),\ (S,S,N_w,T),\ (S,W,N_w,T),\ (T,N_w,T,S),\ (T,N_w,S,S),\ (T,N_w,W,S),\ (S,N_w,N_w,S)
\]
Together with $(T,T,T,T)$ (for all $N\ge2$) and $(T,S,S,T)$, this gives $|\phi(\mathcal{C}_{N,1})|=9$ for $N\ge2$.
\end{proof}
The significance of this classification is that every component of a minimal element of $\Omega_3^{N,1}$ belongs to one of finitely many types. Consequently, the problem of identifying generators reduces to determining which sequences of these types can be combined so that all non-admissible faces cancel.

Each vertex of $\Gamma$ can be labeled by its type that is listed in the previous theorem. We determine the connectivity relations between types, which encode all possible cancellations. We label components types by
\[
\begin{aligned}
\gamma_1&=(S,T,N_w,T), & \gamma_2&=(S,S,N_w,T), & \gamma_3&=(S,W,N_w,T),\\
\gamma_4&=(T,N_w,T,S), & \gamma_5&=(T,N_w,S,S), & \gamma_6&=(T,N_w,W,S),\\
\gamma_7&=(S,N_w,N_w,S).
\end{aligned}
\]
The non-admissible free elements labeled as follows
\[\gamma_8=(T,S,S,T),  \gamma_9=(T,T,T,T)\]
For the following lemma, one-step connectable means, they share the $N_w$ face and the linear combination of these two elements will cancel the $N_w$ component.

\begin{lemma}\label{lem:connectability_structure}
The components $\gamma_i$ are one-step connectable only to themselves and to $\gamma_7$ for $i=1,2,4,5$ where $\gamma_3$ and $\gamma_6$ is one-step connectable to only $\gamma_7$. The component $\gamma_7$ is one-step connectable to all elements of $\mathcal{L}$.
\end{lemma}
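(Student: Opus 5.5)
The plan is to work entirely with the position of the non-admissible face. One-step connectability of two components $v=e_{a,j,k,b}$ and $v'=e_{a,j',k',b}$ in the same $(a,b)$-cluster requires that they share an $N_w$ face. Since both lie in $\mathcal{A}_3$, the only faces that can be non-admissible are $e_{a,k,b}$ (second entry, recorded by $\pi_2$) and $e_{a,j,b}$ (third entry, recorded by $\pi_3$), as in Lemma \ref{lemma:Nw-cancellation}.

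The first step is to determine, for each type in $\mathcal{L}$, which slot carries $N_w$. The types $\gamma_1,\gamma_2,\gamma_3$ have it in slot $3$, with face $e_{a,j,b}$. The types $\gamma_4,\gamma_5,\gamma_6$ have it in slot $2$, with face $e_{a,k,b}$. The type $\gamma_7$ has it in both slots. Two components can share the face $e_{a,j,b}$ only if they have the same $j$, and they share $e_{a,k,b}$ only if they have the same $k$. So the shared face must sit in slot $3$ for both or in slot $2$ for both. This immediately excludes connections between $\{\gamma_1,\gamma_2,\gamma_3\}$ and $\{\gamma_4,\gamma_5,\gamma_6\}$. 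It also shows that $\gamma_7$, having $N_w$ in both slots, can meet every element of $\mathcal{L}$ through the appropriate slot.

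The second step is to refine this using the remaining coordinates, which are determined by edges of $G$. Take two slot-$3$ components $e_{a,j,k,b}$ and $e_{a,j,k',b}$ with $k\neq k'$. Their second entries $\phi(e_{a,k,b})$ and $\phi(e_{a,k',b})$ depend on whether $e_{a,k}$ and $e_{a,k'}$ exist and whether other vertices form squares. Their first entries $\phi(e_{j,k,b})$, $\phi(e_{j,k',b})$ are $S$ by Lemma \ref{lemma:Nw-cancellation}, since $e_{j,b}\notin\mathcal{A}_1$. I will argue as follows:

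\begin{itemize}
\item For $\gamma_1$ and $\gamma_2$, taking $k'$ with the same local configuration as $k$ yields a partner of the same type. This gives self-connectability: for instance $e_{a,j,k,b}-e_{a,j,k',b}$, with the square $e_{j,k,b}-e_{j,k',b}$, is realizable in a digraph.
\item For $\gamma_3$, the second face $e_{a,k,b}$ is of type $W$, meaning it is admissible but lies in neither a triangle nor a square. I need to show that a partner of type $\gamma_3$, or of type $\gamma_1$ or $\gamma_2$, would create a square through $e_{a,k,b}$ or $e_{a,k',b}$ that contradicts the $W$ label. The only partner left is then $\gamma_7$, whose second face is itself non-admissible, so no conflict arises.
\end{itemize}

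The case $\gamma_6$ is dual, handled by the slot-$2$ argument with the roles of $j$ and $k$ exchanged. The analogous statements for $\gamma_4$ and $\gamma_5$ follow by the same reversal symmetry.

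The main obstacle is this second step for $\gamma_3$ and $\gamma_6$. There I must make precise, beyond the loose definitions in Theorem \ref{thm:C_N1_types}, why $W$ is incompatible with a same-slot partner other than $\gamma_7$. I would first try to show that two components with the same $j$ and both second faces admissible force $e_{a,k,b}$ and $e_{a,k',b}$ to be completed into a square $e_{a,k,b}-e_{a,k',b}$, which contradicts the label $W$. The Mayer-type cancellation $v+\sum\xi^m v_m$ must also be handled. By the argument of Theorem \ref{thm:omega_2}, such a combination reduces to pairwise differences of components sharing the face, so it creates no new one-step connections.

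Finally, I will exhibit small digraphs realizing each permitted connection: $\gamma_i$ with $\gamma_i$ for $i=1,2,4,5$, and $\gamma_7$ with each $\gamma_i\in\mathcal{L}$. This shows the permitted connections actually occur.
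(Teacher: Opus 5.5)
\textbf{Gap: $\gamma_1$--$\gamma_2$ (and dually $\gamma_4$--$\gamma_5$) is never excluded.} Your plan does not prove the ``only'' part of the lemma for $i=1,2,4,5$. Step 1 rules out connections between $\{\gamma_1,\gamma_2,\gamma_3\}$ and $\{\gamma_4,\gamma_5,\gamma_6\}$. Your $W$-argument rules out $\gamma_3$ pairing with $\gamma_1,\gamma_2,\gamma_3$. Nothing, however, excludes a $\gamma_1$ component and a $\gamma_2$ component sharing the face $e_{a,j,b}$, or dually $\gamma_4$ and $\gamma_5$ sharing $e_{a,k,b}$. For $\gamma_1,\gamma_2$ you only show self-connectability, which is the existence half.

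The missing idea is the first step of the paper's proof. For a slot-$3$ type with $e_{a,k}\in\mathcal{A}_1$, the second entry $\phi(e_{a,k,b})$ is $T$ exactly when $e_{a,b}\in\mathcal{A}_1$. This is one and the same edge for every component of an $(a,b)$-cluster. Hence $\{\gamma_1,\gamma_4\}$, which need $e_{a,b}$, can never meet $\{\gamma_2,\gamma_3,\gamma_5,\gamma_6\}$, which need its absence, in a cluster at all. Your list of what the second entries ``depend on'' omits $e_{a,b}$, and that edge is precisely the discriminator.

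The same observation is what rules out a $\gamma_3$--$\gamma_1$ pairing. When $e_{a,b}$ is present, $e_{a,k,b}$ and $e_{a,k',b}$ are triangles, not a square, so your ``creates a square'' reasoning does not apply there. What contradicts the label $W$ is the edge $e_{a,b}$ itself. Your square argument does work for $\gamma_3$ against $\gamma_2$ and $\gamma_3$. It is the paper's second partition: a unique $2$-path $a\to *\to b$ versus several.

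\textbf{Smaller point: cross-slot sharing.} In Step 1, the claim ``two components can share $e_{a,j,b}$ only if they have the same $j$'' is not literally true. The path $e_{a,j,b}$ is also the slot-$2$ face of any $e_{a,j',j,b}$. Your conclusion survives, because such a cross-slot shared face is always admissible. Indeed, $e_{a,j,k,b}\in\mathcal{A}_3$ gives the edge $a\to j$, and $e_{a,j',j,b}\in\mathcal{A}_3$ gives $j\to b$. You should state this explicitly, since it is what justifies treating the slot of the $N_w$ face (the indicators $\pi_2,\pi_3$) as an invariant of a cancellation.

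Your plan to exhibit explicit digraphs realizing each permitted connection, including $\gamma_7$ with every element of $\mathcal{L}$, goes beyond the paper, which only asserts that there is no restriction.
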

\begin{proof}
    Let $w\in \Omega_3^{N,1}$ be a $(a,b)$-cluster so that $w=\sum w_i$.
    The existence of $e_{a,b}$ will create partition of patterns. Thus the elements of $N_{a,b}=\{\gamma_2,\gamma_3,\gamma_5,\gamma_6\}$ is not one-step connectable to the elements of $A_{a,b}=\{\gamma_1,\gamma_4\}$ where $A_{a,b}$ is the set of types where $e_{a,b}$ is admissible and $N_{a,b}$ is non-admissible. 
    Furthermore, they will not be in the linear combination together with $\gamma_1$ and $\gamma_4$ for generators of $\Omega_3^{N,1}$.

    Similarly, the existence of multiple $e_{a,*,b}$ is possible for the patterns $\gamma_2,\gamma_5$ while there is a unique $e_{a,*,b}$ in $\gamma_3,\gamma_6$. This results in three different groups such as $\{\gamma_1,\gamma_4\},\{\gamma_2,\gamma_5\}$ and $\{\gamma_3,\gamma_6\}$

    Observe that $\pi_2(\gamma_1)=0$ and $\pi_2(\gamma_4)=1$ which makes them incompatible for one-step connection where $\pi_2$ and $\pi_3$ are indicator maps that is defined in the proof of Theorem \ref{thm:C_N1_types}. The only element of $\mathcal L$ whose image-type contains non-admissible faces at both indices $2$ and $3$ is $\gamma_7$. Therefore $\gamma_1$ and $\gamma_4$ are one-step connectable only to themselves and to $\gamma_7$. The same reason is valid for pairs  $(\gamma_3,\gamma_6)$ and $(\gamma_2,\gamma_5)$

    Let $\phi(e_{a,j_1,k_1,b})=\gamma_3$ and $\phi(e_{a,j_1,k_2,b})=\gamma_3$ be two $3$-paths so that their $N_w$ components aligns where $\gamma_3=(S,W,N_w,T)$. The pattern induces that $e_{a,k_2}\in\mathcal{A}_1$ and $e_{a,k_1,b}$ is only admissible which conditions that there is no $e_{i,l}$ or $e_{i,*,l}$. Since $e_{i,k_2}\in\mathcal{A}_1$ we have $e_{i,k_2,l}\in\mathcal{A}_2$. Therefore this one-step connection is not possible. Thus $\gamma_3$ is only one-step connectable to $\gamma_7$. 
    Since $\gamma_6=(T,N_w,W,S)$ is symmetric of $\gamma_3=(S,W,N_w,T)$, the same reason is obstacle to create one-step connection $\gamma_6$ to itself. Therefore $\gamma_6$ is only one-step connectable to $\gamma_7$. There is no restriction to connect $\gamma_7$ to itself which proves the last part of the claim.
    
 \end{proof}

The preceding lemma determines exactly which pairs of types can cancel a given non-admissible face. Thus, the one-step connectability relation plays the same role as the edge relations $E_1$ and $E_2$ in the graph $\Gamma$. In particular, sequences of one-step connectable types correspond to alternating paths in $\Gamma$. Therefore, the coexistence relation together with the connectivity relations on the $\gamma_i$ generates the equivalence classes of building blocks for minimal clusters.

 \begin{theorem}
     The minimal clusters in $\Omega_3^2$ are formed by the following groups
     \[T_1=(\gamma_1,\gamma_4,\gamma_7),\quad T_2=(\gamma_2,\gamma_5,\gamma_7),\quad T_3=(\gamma_3,\gamma_6,\gamma_7),\quad T_4=(\gamma_7), \quad T_5=(\gamma_9), \quad T_6 = (\gamma_8). \]
 \end{theorem}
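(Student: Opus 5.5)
The plan is to take a minimal $(a,b)$-cluster $w\in\Omega_3^2$ and read off its component types from Theorem \ref{thm:C_N1_types}. Every component $v$ of $w$ satisfies $\phi(v)\in\{\gamma_1,\dots,\gamma_9\}$. I would then split $w$ according to whether any component has a non-admissible face.

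\emph{Components with no non-admissible face.} If some component has type $\gamma_9=(T,T,T,T)$ or $\gamma_8=(T,S,S,T)$, its boundary lies in $\mathcal{A}_2$ on its own, so the single path is in $\Omega_3^2$ (for $\gamma_8$ I would check this directly from the shortcut-edge description in the proof of Theorem \ref{thm:C_N1_types}). Minimality then forces $w$ to be a multiple of that path. This gives the classes $T_5=(\gamma_9)$ and $T_6=(\gamma_8)$.

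\emph{Components of types $\gamma_1,\dots,\gamma_7$.} Here every component has at least one $N_w$ face, and each such face has to be cancelled inside the cluster. For $N=2$ the only cancellation mechanism is pairing a path with the negative of another path that has the same non-admissible face, since the Mayer-type root-of-unity cancellation is absent. So the component types form a connected set under one-step connectability. Connectedness follows from minimality: a splitting into two mutually non-cancelling blocks would make each block $\partial$-invariant, contradicting minimality.

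\emph{Organising by Lemma \ref{lem:connectability_structure}.} Every type in $\mathcal{L}$ is one-step connectable only to its own group and to $\gamma_7$. The partition argument in that proof, namely whether $e_{a,b}$ is admissible and whether $e_{a,*,b}$ is unique, shows that $\gamma_1,\gamma_4$ never coexist with $\gamma_2,\gamma_5,\gamma_3,\gamma_6$ in one cluster, and similarly for the other pairs. Hence the set of types in $w$ lies in one of $\{\gamma_1,\gamma_4,\gamma_7\}$, $\{\gamma_2,\gamma_5,\gamma_7\}$, $\{\gamma_3,\gamma_6,\gamma_7\}$, or is $\{\gamma_7\}$ alone. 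These are $T_1,\dots,T_4$.

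\emph{Coexistence of types of different kinds.} I would still check that no cluster mixes $\gamma_8$ or $\gamma_9$ with $\mathcal{L}$-types. Such a mix would contain a self-contained $\partial$-invariant summand, which again contradicts minimality.

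\emph{Realisation.} To show each group actually occurs, I would exhibit examples, for instance the trapezohedron $T_m$ and its merging images from the cited theorem. The cluster-graph $\Gamma$ picture matches this: alternating cycles and paths in $\Gamma$ have vertices labelled by types within a single group.

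\emph{Main obstacle.} The hardest step is the exclusion of cross-group coexistence. One-step connectability only controls adjacent pairs, but a minimal cluster could a priori pass from a $\gamma_1$ through a chain of $\gamma_7$'s to a $\gamma_2$. The argument I would try first is that the admissibility of $e_{a,b}$ and the number of middle vertices $e_{a,*,b}$ are properties of the cluster endpoints $(a,b)$ alone. They are therefore shared by every component of $w$, and this would force all $\mathcal{L}$-types in $w$ into the same group. Making this rigorous for the $W$-type faces (the uniqueness of $e_{a,*,b}$) is where I expect the real work to lie.
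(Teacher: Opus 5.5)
Your proposal is correct and follows essentially the same route as the paper. Both arguments isolate $\gamma_8$ and $\gamma_9$ by minimality, use the fact that minimality forces the remaining components to be connected under one-step connectability, and then split the $\mathcal{L}$-types into the families $T_1,\dots,T_4$ via the coexistence conditions: whether $e_{a,b}$ is admissible, and whether the $2$-path $e_{a,*,b}$ is unique or one of several. The step you flag as the main obstacle is exactly the paper's coexistence argument. It is in fact immediate, because $S$ versus $W$ in positions $2,3$ is decided by the number of $a\ast b$ $2$-paths, which, like the admissibility of $e_{a,b}$, depends only on the cluster endpoints $(a,b)$.
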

\begin{proof}
Let $w\in \Omega_3^{2}$ be a minimal cluster. By Theorem
\ref{thm:C_N1_types}, every component of $w$ has image-type equal to one of
\[
\gamma_1,\ldots,\gamma_9.
\]
The types $\gamma_8=(T,S,S,T)$ and $\gamma_9=(T,T,T,T)$ contain no
non-admissible face. Hence such components are already $\partial$-invariant
inside their own cluster. Therefore, by minimality, if one of these types
occurs in $w$, then no other type can occur in the same minimal cluster.
This gives the two isolated classes
\[
T_5=(\gamma_9), \qquad T_6=(\gamma_8).
\]

It remains to consider the types in
\[
\mathcal L=\{\gamma_1,\ldots,\gamma_7\}.
\]
These are precisely the types containing at least one non-admissible face.
Since $w\in \Omega_3^2$, every non-admissible face appearing in
$\partial w$ must cancel with the same non-admissible face coming from another
component of $w$. Thus the components of $w$ must be connected by successive
one-step connectability relations.

By Lemma \ref{lem:connectability_structure}, the only possible one-step
connections are the following:
\[
\gamma_1,\gamma_2,\gamma_4,\gamma_5
\quad\text{are connectable only to themselves and to }\gamma_7,
\]
\[
\gamma_3,\gamma_6
\quad\text{are connectable only to }\gamma_7,
\]
and
\[
\gamma_7
\quad\text{is connectable to every element of }\mathcal L.
\]
Therefore, any connected minimal cluster containing non-admissible faces must
belong to one of the connected components generated by these relations.

Now the coexistence conditions separate the possible types into three
families. The types $\gamma_1$ and $\gamma_4$ coexist with $\gamma_7$ since they admit $e_{a,b}$ as an edge, giving
\[
T_1=(\gamma_1,\gamma_4,\gamma_7).
\]
Similarly, the types $\gamma_2$ and $\gamma_5$ coexist with $\gamma_7$ and
with each other since $e_{a,b}$ is not an edge and there can be multiple $e_{a,*,b}$ 2 paths, giving
\[
T_2=(\gamma_2,\gamma_5,\gamma_7).
\]
The types $\gamma_3$ and $\gamma_6$ cannot connect to themselves, but each
is one-step connectable to $\gamma_7$. Hence they form the third possible
family where $e_{a,b}$ is not an edge and there exists an unique 2-paths $e_{a,*,b}$
\[
T_3=(\gamma_3,\gamma_6,\gamma_7).
\]
Finally, since $\gamma_7$ contains non-admissible faces in both relevant
positions and is one-step connectable to itself, it may form a minimal
cluster without any other type. This gives
\[
T_4=(\gamma_7).
\]

No other minimal cluster can occur. Indeed, any minimal cluster must be
connected under the one-step connectability relation; otherwise it would
split into two nonempty subclusters whose boundaries are separately
allowable, contradicting minimality.
Together with the two non-admissible-free isolated types $\gamma_8$ and
$\gamma_9$, this proves that the minimal clusters are formed exactly by
\[
T_1=(\gamma_1,\gamma_4,\gamma_7),\quad
T_2=(\gamma_2,\gamma_5,\gamma_7),\quad
T_3=(\gamma_3,\gamma_6,\gamma_7),\quad
T_4=(\gamma_7),\quad
T_5=(\gamma_9),\quad
T_6=(\gamma_8).
\]
\end{proof}

The following corollary will describe the exact combinations of $\gamma_i$ appeared in the minimal cluster.

\begin{corollary}
The minimal clusters in $\Omega_3^{N,1}$ are exactly the following $m\geq 0$:

\begin{itemize}
    \item For $i=1,2$, $\gamma_7$-chains of the form
    \[
    \gamma_i \;-\; (\gamma_7)^m \;-\; \gamma_j,
    \]
    where $j=i$ if $m$ is even and $j=i+3$ if $m$ is odd.
    \item For $i=3$, $\gamma_7$-chains of the form
    \[
    \gamma_i \;-\; (\gamma_7)^m \;-\; \gamma_j,
    \]
    where $j=i$ if $m\geq 2$ is even  and $j=i+3$ if $m$ is odd.
    \item Pure $\gamma_7$-clusters trapezohedron and the isolated types $\gamma_8$ and $\gamma_9$.

\end{itemize}
\end{corollary}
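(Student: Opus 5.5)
The corollary follows from the preceding theorem on the families $T_1,\dots,T_6$ together with Lemma \ref{lem:connectability_structure}, by recording how the non-admissible faces are passed along a minimal cluster. I would model a minimal $(a,b)$-cluster $w\in\Omega_3^{N,1}$ as a graph whose vertices are its components $e_{a,j,k,b}$ and whose edges are one-step connections, that is, shared $N_w$ faces. Each component of type $\gamma_1,\dots,\gamma_6$ has exactly one $N_w$ face: in position $3$ for $\gamma_1,\gamma_2,\gamma_3$ and in position $2$ for $\gamma_4,\gamma_5,\gamma_6$. Each $\gamma_7$ component has two $N_w$ faces, one in each position. Types $\gamma_8,\gamma_9$ have no $N_w$ face, so by minimality they form one-element clusters. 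This gives the last bullet of the statement, up to the pure $\gamma_7$ case.

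First step: show that the connection graph of a minimal cluster is a path or a cycle, a "$\gamma_7$-chain." Every $\gamma_7$ vertex has two $N_w$ faces and every other $\mathcal L$-type has one. By minimality, a face cancelled by more than two components would let us discard a sub-combination. So I would argue, as in the passage from $\Gamma$ to alternating paths and cycles, that a minimal cluster has degree at most $2$ at $\gamma_7$ vertices and degree $1$ at the others.

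The cases are then:
\begin{itemize}
\item A cycle consists only of $\gamma_7$'s. Since positions $2$ and $3$ alternate around the cycle, it is exactly the trapezohedron $T_m$ already described.
\item A path has two endpoints of types in $\{\gamma_1,\dots,\gamma_6\}$ and interior vertices all of type $\gamma_7$.
\end{itemize}
By the preceding theorem, both endpoints lie in the same family $T_i$, since the presence of $e_{a,b}$ and the uniqueness of $e_{a,*,b}$ are properties of the cluster.

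Second step: a parity count for the endpoint types. Along the chain, consecutive shared faces alternate between positions $3$ and $2$, because a $\gamma_7$ uses its position-$2$ face for one neighbour and its position-$3$ face for the other. Write $\gamma_i$ for the starting endpoint and suppose it has its $N_w$ in position $3$. After $m$ copies of $\gamma_7$, the face entering the last endpoint is in position $3$ if $m$ is even and in position $2$ if $m$ is odd. Hence the last endpoint is $\gamma_i$ when $m$ is even and $\gamma_{i+3}$ when $m$ is odd.

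For $i=3$, Lemma \ref{lem:connectability_structure} forbids $\gamma_3$ from connecting to itself, so $m=0$ is excluded. This gives $m\ge 2$ in the even case, and $m\ge1$ in the odd case is automatic. For $i=1,2$, $m=0$ is allowed, since these types connect to themselves; so is $\gamma_i-\gamma_{i+3}$ type pairing, which is absorbed by relabelling the starting endpoint.

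Finally, for sufficiency, each such chain has to be realized by an actual minimal element. I would choose the coefficients so that the shared faces cancel, alternating signs as in the $N=2$ case; the extra $\xi$-cancellation is not needed, by the argument in Theorem \ref{thm:omega_2}.

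The main obstacle is the first step: justifying that a shared $N_w$ face is cancelled by exactly two components in a minimal cluster. With $N\ge3$, the relation $1+\xi+\dots+\xi^{N-1}=0$ allows $N$ components sharing one face. I would handle this as in the proof of Theorem \ref{thm:omega_2}: rewrite such a combination as a sum of pairwise differences, which contradicts minimality unless only two components are present.
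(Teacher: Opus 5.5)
Your outline is the paper's argument: split into families by the preceding theorem, chains with all-$\gamma_7$ interior, parity switching of the endpoint type, $\gamma_8,\gamma_9$ isolated, and all-$\gamma_7$ cycles giving $T_m$. The paper only asserts the chain structure (``minimality forces the chain to terminate''), and you are right that it needs proof. However, the fix you propose at exactly that step fails.

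Take three components sharing an $N_w$ face, with coefficients satisfying $\sum\alpha_t=0$. Rewriting $\alpha_1u_1+\alpha_2u_2+\alpha_3u_3=-\alpha_2(u_1-u_2)-\alpha_3(u_1-u_3)$ contradicts minimality only if $u_1-u_2\in\Omega_3^{N,1}$. In Theorem~\ref{thm:omega_2} this holds because a $2$-path has only one face that can be non-allowed. Here it does not. Take $u_t=e_{a,j_t,k,b}$ of type $\gamma_7$ sharing $e_{a,k,b}$. Then $\partial(u_1-u_2)$ still contains $\xi^2(e_{a,j_1,b}-e_{a,j_2,b})$ with both paths non-allowed, so no allowed sub-combination has been produced. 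Your sentence that such a face ``would let us discard a sub-combination'' is likewise unproved.

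The missing idea is that the $\xi$-cancellation is not a separate mechanism. In an $(a,b)$-cluster, a non-allowed face $e_{a,k,b}$ (so $a\not\to k$) arises only by deleting $j$ from some $e_{a,j,k,b}$. The alternative $e_{a,k,j',b}$ would need $a\to k$. Similarly, $e_{a,j,b}$ with $j\not\to b$ arises only by deleting $k$. So each non-allowed face appears in $\partial w$ with one fixed factor, $\xi$ or $\xi^2$. The condition is therefore that the plain coefficient sum vanishes; $1+\xi+\dots+\xi^{N-1}=0$ is just one solution of it. Hence $\Omega_3^{N,1}$ is cut out by exactly the equations defining $\Omega_3^{2}$.

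With this in hand, argue on supports.
\begin{itemize}
\item Regard the components $e_{a,j,k,b}$ of a minimal $w$ as edges $j$--$k$ of a bipartite graph. Call $j$ constrained if $j\not\to b$, and $k$ constrained if $a\not\to k$. Each constrained vertex has support degree $0$ or at least $2$.
\item If the support contains a cycle, the alternating $\pm1$ weighting on that (even) cycle lies in $\Omega_3^{N,1}$.
\item Otherwise the support is a forest. A maximal path in it ends at leaves, which must be unconstrained, so its alternating weighting lies in $\Omega_3^{N,1}$.
\end{itemize}
Minimality forces the support to be that cycle or path, which is exactly your degree bound. The same uniform-$\xi$ fact is what makes your alternating signs work in the sufficiency step.

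Finally, relabelling does not absorb chains whose two endpoints both carry the $N_w$ face in position $2$. An example is $e_{a,j,k,b}-e_{a,j',k,b}$ with $a\not\to k$ and $j,j'\to b$: it is of type $\gamma_4-\gamma_4$ if $a\to b$ and $\gamma_5-\gamma_5$ otherwise. Such minimal clusters exist. The statement covers them only up to edge reversal, which exchanges $\gamma_i$ and $\gamma_{i+3}$; the paper's proof has the same omission.
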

\begin{proof}
By the classification theorem, every minimal cluster lies in one of the
families
\[
(\gamma_i,\gamma_{i+3},\gamma_7), \quad i=1,2,3,
\]
or is one of the isolated types $\gamma_7,\gamma_8,\gamma_9$.

By Lemma \ref{lem:connectability_structure}, the only way to connect
$\gamma_i$ and $\gamma_{i+3}$ is through $\gamma_7$. Hence every nontrivial
minimal cluster in these families is a chain whose endpoints lie in
$\{\gamma_i,\gamma_{i+3}\}$ and whose intermediate vertices are all $\gamma_7$.

Each $\gamma_7$-connection switches the endpoint type. Therefore, if the
number $m$ of intermediate $\gamma_7$-vertices is even, the endpoints
coincide, while if $m$ is odd, the endpoints differ. This yields the stated
chains.

Minimality forces the chain to terminate exactly at the first cancellation,
so no other configurations occur.

Finally, $\gamma_8$ and $\gamma_9$ contain no non-admissible faces and are
therefore isolated. A cluster consisting only of $\gamma_7$ must cancel both
types of non-admissible faces which forms trapezohedron $T_m$.
\end{proof}
\begin{figure}
    \centering
    \includegraphics[width=0.9\linewidth]{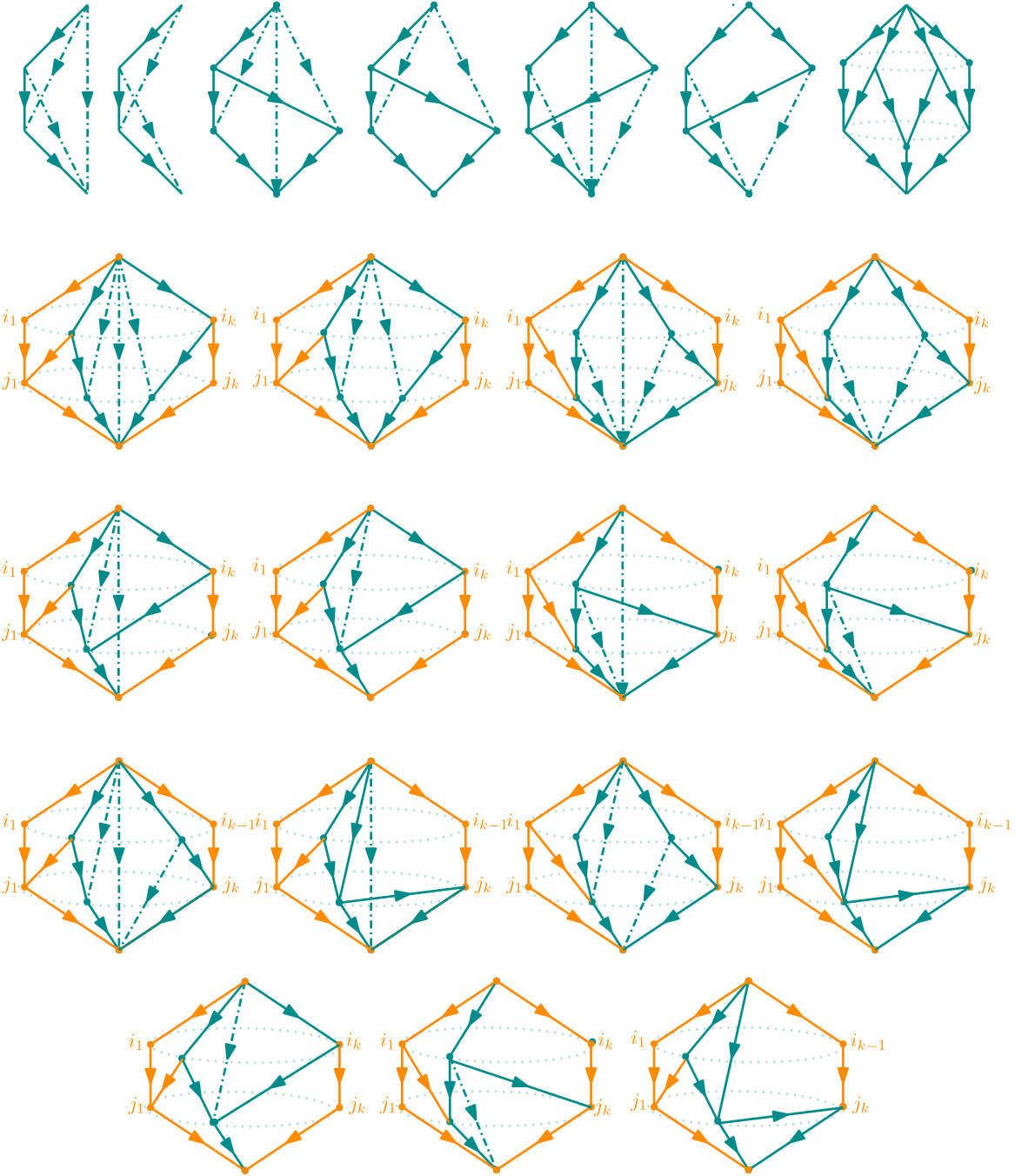}

    \caption{Left to right.
Top row: $\gamma_9$, $\gamma_8$, self-connections $(\gamma_i,\gamma_i)$ for $i=1,2,4,5$, and the trapezohedron.
Second row: connections $(\gamma_i,\gamma_7)$ introducing a new vertex for $i=1,2,4,5$.
Third row: connections $(\gamma_i,\gamma_7)$ with repeated vertices for $i=1,2,4,5$.
Fourth row: $(\gamma_i,\gamma_{i+3},\gamma_7)$ with a new vertex and with a repeated vertex for $i=1,2$.
Last row: connections $(\gamma_3,\gamma_7)$, $(\gamma_6,\gamma_7)$, and the chain $(\gamma_3,\gamma_6,\gamma_7)$.
The dashed edges represent admissible edges that are not involved in the generator element representation.
}
    \label{fig:all_class}
\end{figure}

For the families $T_1$ and $T_2$, there are two distinct realizations. These correspond to the two possible endpoint configurations of the alternating path. In one case, the endpoint components share three vertices, while in the other case they share only two vertices, even though no edge exists between them in $\Gamma$. Each configuration produces a distinct generator $(a,b)$-cluster.

For the family $T_3$, there is only one possible realization. This is due to the fact that the types $\gamma_3$ and $\gamma_6$ contain a $W$-component, which forces the existence of a unique $2$-path of the form $e_{a,*,b}$, yielding a unique generator pattern. 
Together, these configurations justify that every minimal cluster is generated by $\gamma_7$-chains connecting compatible endpoint types, as described in the preceding classification as in the Fig \ref{fig:all_class}.

Our goal is to determine $\Omega_3^{N}$, which is a subspace of $\Omega_3^{N,1}$. The next example will showcase that not all generators of $\Omega_3^{N,1}$ belong to $\Omega_3^{N,2}$, and an additional selection criterion is required.

\begin{figure}
    \centering
    \includegraphics[width=0.3\linewidth]{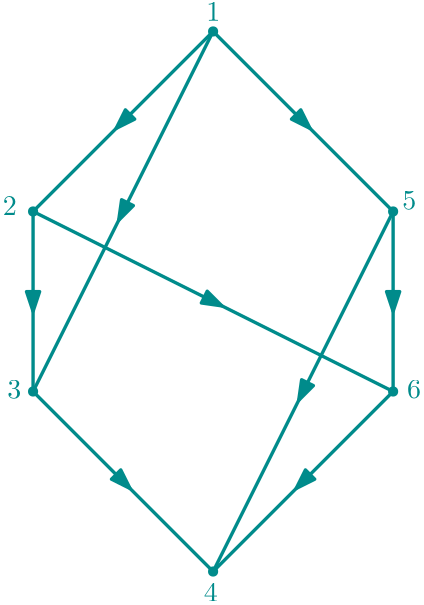}
    \caption{The digraph in the Example \ref{ex:counterexamplev2v5}}
    \label{fig:v2v5example}
\end{figure}

\begin{example}\label{ex:counterexamplev2v5}
    Let $G$ be the graph of the following combinations $\gamma_2-\gamma_7-\gamma_5$ as in Fig \ref{fig:v2v5example} 

    For $N_w$ terms to be canceled the minimal cluster element will have the linear combination as follows $w= e_{1,2,3,4}-e_{1,2,6,4}+e_{1,5,6,4}$.
    \[\partial(w)= e_{2,3,4}-e_{2,6,4}+e_{5,6,4}+\xi e_{1,3,4}+\xi^2 e_{1,5,4} +\xi^3( e_{1,2,3}-e_{1,2,6}+e_{1,5,6})\in \mathcal{A}_2\]
    \[\partial^2(w)= (\xi^2+\xi^3)e_{1,4}+v,\quad v\in\mathcal{A}_1\]
    For $N\geq 3$, $\xi^2+\xi^3\neq 0$ therefore $w\in\Omega_3^{N,1}$ but $w\not\in \Omega_3^{N,2}$.
\end{example}

 The following definition will create an identification of the non-admissible edges that arises in the boundary of different image types such as square and admissible.
\begin{definition}
    Let $(i,k)\in V \times V $ so that $e_{i,k}\not\in\mathcal{A}_1$.
    \begin{itemize}
        \item $e_{i,k}$ is called connecting edge if there exists a square $e_{i,j,k}-e_{i,j',k}\in \Omega_2^{N}$.
        \item $e_{i,k}$ is called complementary edge if there is only one $e_{i,j,k}\in \mathcal{A}_2$.
    \end{itemize}
    
\end{definition}

 If $v\in\mathcal{A}_3$, than \[\partial^2(v)= \sum_k s_k e^s_{i,j}+\sum_l u_l e_{i,j}^c +w \]
where $w\in\mathcal{A}_1$, $e_{i,j}^s$ is connecting edge and  $e_{i,j}^c$ is complementary edge. 
When $N=2$, these will vanish since $\partial^2=0$ but for $N\geq 3$, these components need to be eliminated. 
In other words the generators should be free of the connecting edges and complementary edge.

\begin{prop}
\label{v2v5v8comb}
The types supported on $(\gamma_2,\gamma_5,\gamma_7)$ and $\gamma_8$ give minimal elements that do not lie in $\Omega_3^{N,2}$ for $N\geq 3$.
\end{prop}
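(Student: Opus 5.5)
The approach is to compute $\partial^2$ of a minimal element explicitly and isolate the coefficient of the edge $e_{a,b}$. That edge is non-admissible for every type involved: by Lemma \ref{lem:connectability_structure} and the classification theorem, the family $T_2=(\gamma_2,\gamma_5,\gamma_7)$ is exactly the family in which $e_{a,b}\notin\mathcal{A}_1$ and there are several $2$-paths $e_{a,*,b}$. For $\gamma_8=(T,S,S,T)$ the shortcut $e_{i,l}$ is absent by definition. In both cases $e_{a,b}$ is therefore a connecting edge in the sense of the preceding definition, and the task is to show that its coefficient in $\partial^2 w$ is a nonzero multiple of $\xi^2+\xi^3=\xi^2(1+\xi)$. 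This quantity vanishes only when $\xi=-1$, that is, only for $N=2$.

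First, for a single elementary path $v=e_{a,j,k,b}$, I would use the formula for $\partial^2$ (Kapranov's lemma with $r=2$, or direct expansion) to compute the coefficient of $e_{a,b}$. Deleting $j$ and then $k$ gives $\xi\cdot\xi$, and deleting $k$ and then $j$ gives $\xi^2\cdot\xi$. The coefficient is therefore $\xi^2+\xi^3$, independent of $j$ and $k$, exactly as in Example \ref{diamond_example} and Example \ref{ex:counterexamplev2v5}. Consequently, for a cluster $w=\sum_t\alpha_t e_{a,j_t,k_t,b}$, the coefficient of $e_{a,b}$ in $\partial^2 w$ is $(\xi^2+\xi^3)\sum_t\alpha_t$. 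Membership $w\in\Omega_3^{N,2}$ requires this to vanish, so for $N\ge3$ it requires $\sum_t\alpha_t=0$.

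Second, I would show that $\sum_t\alpha_t\neq0$ for every minimal element of these types.
\begin{itemize}
\item For $\gamma_8$, the minimal element is a single elementary path with $\alpha=1$, which settles this case; the diamond in Example \ref{diamond_example} is the model case.
\item For the chains $\gamma_2-(\gamma_7)^m-\gamma_j$ from the corollary, I would use that cancellation of each $N_w$ face between consecutive components is a pairwise relation. By Lemma \ref{lemma:Nw-cancellation} it forms a square, and it forces the coefficients along the alternating chain to be $\alpha_{t+1}=-\alpha_t$ up to a global scalar; this is the sign pattern $+,-,+$ seen in Example \ref{ex:counterexamplev2v5}.
\item A chain with $m$ interior $\gamma_7$'s has $m+2$ components, so the alternating sum is $\pm1$ if $m$ is even and $0$ if $m$ is odd.
\end{itemize}

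The odd case is the main obstacle, since there the $e_{a,b}$ coefficient of $\partial^2 w$ cancels, as in $\gamma_2-\gamma_7-\gamma_5$ with an even number of terms. For that case I would look at the other connecting edges instead. The endpoints $\gamma_2$ and $\gamma_5$ carry square faces $e_{a,j,k}$ and $e_{j,k,b}$ respectively, so their shortcuts $e_{a,k}$ and $e_{j,b}$ are non-admissible connecting edges. I would compute the coefficients of these edges in $\partial^2 w$: each one arises from a single endpoint component together with its one-step neighbour, with coefficient of the form $\alpha(\xi^{c}+\xi^{c+1})$, for example $\xi^3+\xi^4$. I would then argue that no other component of the minimal chain contributes to these edges, because the connecting squares sit outside the cluster; hence the coefficient is nonzero for $N\ge3$.

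If this local argument fails in degenerate configurations with repeated vertices (the third row of Fig. \ref{fig:all_class}), I would fall back on a direct computation for the two realizations of $T_2$ listed after the corollary. In each, I would exhibit a surviving term $(\xi^p+\xi^{p+1})e^s$ with $e^s$ a connecting edge.
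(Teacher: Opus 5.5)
\textbf{There is a genuine gap.} Your $e_{a,b}$-coefficient computation is correct and settles $\gamma_8$. However, your parity count for the chains is inverted. The case you flag as the obstacle also cannot be rescued, because the claim is false there. A chain with $m+2$ components and alternating signs has $\sum_t\alpha_t=\pm1$ exactly when $m$ is \emph{odd}, i.e.\ for $\gamma_2-(\gamma_7)^m-\gamma_5$. The three-term element of Example~\ref{ex:counterexamplev2v5} is this easy case, and its term $(\xi^2+\xi^3)e_{1,4}$ survives. The sum vanishes for $m$ even, i.e.\ for $\gamma_2-(\gamma_7)^m-\gamma_2$ and $\gamma_5-(\gamma_7)^m-\gamma_5$, including $m=0$. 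In that case your fallback fails. Expanding,
\[
\partial^2 e_{a,j,k,b}=(1+\xi)\bigl(e_{k,b}+\xi e_{j,b}+\xi^2 e_{j,k}+\xi^2 e_{a,b}+\xi^3 e_{a,k}+\xi^4 e_{a,j}\bigr).
\]
So the coefficient in $\partial^2 w$ of a missing shortcut $e_{j,b}$ (resp.\ $e_{a,k}$) is $(1+\xi)\xi\sum_{j_t=j}\alpha_t$ (resp.\ $(1+\xi)\xi^3\sum_{k_t=k}\alpha_t$). This is a multiple of the coefficient of the non-allowed face $e_{a,j,b}$ (resp.\ $e_{a,k,b}$) in $\partial w$, hence zero for every $w\in\Omega_3^{N,1}$. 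The one-step neighbour shares that face, and therefore that shortcut, with the opposite sign, so your claim that ``no other component contributes'' is false. You have also swapped the faces: for $\gamma_2=(S,S,N_w,T)$ the face $e_{a,j,k}$ is a triangle, and the missing shortcut is $e_{j,b}$.

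Consequently, take an $(a,b)$-cluster $w\in\Omega_3^{N,1}$ with $a\neq b$, $e_{a,b}\notin\mathcal{A}_1$ and $N\ge3$. Then $w\in\Omega_3^{N,2}$ if and only if $\sum_t\alpha_t=0$. The even-$m$ chains therefore do lie in $\Omega_3^{N,2}$, and the proposition is false for them. For example, take edges $a\to j$, $j\to k$, $j\to k'$, $a\to k$, $a\to k'$, $k\to b$, $k'\to b$, and let $w=e_{a,j,k,b}-e_{a,j,k',b}$. This is a minimal $\gamma_2-\gamma_2$ cluster in $\Omega_3^{N,1}$, and
\[
\partial^2 w=(1+\xi)\bigl(e_{k,b}-e_{k',b}+\xi^2(e_{j,k}-e_{j,k'})+\xi^3(e_{a,k}-e_{a,k'})\bigr)\in\mathcal{A}_1 .
\]
The paper's own proof only asserts qualitatively that the ``square-type contributions do not match'', and it misses this case as well. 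What your $e_{a,b}$ argument actually proves is the statement for $\gamma_8$ and for the odd-$m$ chains $\gamma_2-(\gamma_7)^m-\gamma_5$. There the alternation $\alpha_{t+1}=-\alpha_t$ is justified because a non-allowed face $e_{a,x,b}$ arises only from deleting position $1$ (when $a\not\to x$) or only from deleting position $2$ (when $x\not\to b$). Hence both components sharing it carry the same power of $\xi$.
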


\begin{proof}
For $N\geq 3$, cancellation of non-admissible faces in a minimal cluster must occur in such a way that all induced connecting edges are also eliminated.

First observe that in the families generated by $(\gamma_1,\gamma_4,\gamma_7)$ and $(\gamma_3,\gamma_6,\gamma_7)$, the cancellation of $N_w$-faces produces pairs of square-type faces with matching coefficients. As a consequence, the corresponding connecting edges cancel in the boundary, and no residual terms remain. In particular, in the case of $(\gamma_3,\gamma_6,\gamma_7)$, the terminal components share the same $W$-type face, and their coefficients ensure cancellation of the complementary edges.

In contrast, for the family $(\gamma_2,\gamma_5,\gamma_7)$ and for the isolated type $\gamma_8$, the presence of square-type faces in the interior prevents such cancellation. More precisely, the square-type contributions do not match in a way that eliminates the induced connecting edges. As a result, nonzero boundary terms remain, and these elements fail to lie in $\Omega_3^{N,2}$.

Therefore, minimal elements supported on $(\gamma_2,\gamma_5,\gamma_7)$ and $\gamma_8$ are not contained in $\Omega_3^{N,2}$ for $N\geq 3$.
\end{proof}
    
\begin{theorem}
\label{omega_3n2}
If a $(a,b)$-cluster contains both $(\gamma_2,\gamma_5,\gamma_7)$ and $\gamma_8$ components, then their combination yields a generator in $\Omega_3^{N,2}\cap \Omega_3^{N,1}$. Consequently, the generators of $\Omega_3^{N,2}\cap \Omega_3^{N,1}$ are
\[
T_1=(\gamma_1,\gamma_4,\gamma_7),\quad
T_2=(\gamma_2,\gamma_5,\gamma_7,\gamma_8),\quad
T_3=(\gamma_3,\gamma_6,\gamma_7),\quad
T_4=(\gamma_7),\quad
T_5=(\gamma_9).
\]
\end{theorem}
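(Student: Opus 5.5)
The plan is to work cluster by cluster and track only the coefficient of the missing edge $e_{a,b}$, and of the other connecting or complementary edges, in $\partial^2 w$. By the cluster lemma for $\Omega_n^{N,1}$, every element of $\Omega_3^{N,1}$ is a sum of $(a,b)$-clusters. Both $\partial$ and $\partial^2$ preserve the endpoint data that produce these edges, so it suffices to treat a single $(a,b)$-cluster $w=\sum\alpha_i e_{a,j_i,k_i,b}$.

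\textbf{Step 1: the obstruction term.} By Kapranov's formula for $\partial^2$, the coefficient of $e_{a,b}$ in $\partial^2 e_{a,j,k,b}$ is $[2]_\xi\,\xi^{1+2}=\xi^2+\xi^3$, exactly as in Example \ref{ex:counterexamplev2v5}. Hence the $e_{a,b}$-coefficient of $\partial^2 w$ is $(\xi^2+\xi^3)\sum_i\alpha_i$. When $e_{a,b}$ is allowed, which covers the types $\gamma_1,\gamma_4,\gamma_9$, this gives no condition. Otherwise the condition $w\in\Omega_3^{N,2}$ reduces to
\[
\sum_i \alpha_i=0 ,
\]
together with similar linear conditions on the interior connecting edges $e_{a,k}$ and $e_{j,b}$. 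For these edges the $\partial$-step cancellation already forces matching coefficients, as stated in Proposition \ref{v2v5v8comb}.

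\textbf{Step 2: compute the coefficient sums.} For each minimal family from the Corollary, I compute the coefficient sum using the $\gamma_7$-chain description.

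\begin{itemize}
\item In a $\gamma_2$--$(\gamma_7)^m$--$\gamma_5$ chain, the $N_w$-cancellations force alternating signs $\pm1$ along the chain. The chain has an odd number of terms, so the sum is $\pm1\neq0$. This is the computation of Example \ref{ex:counterexamplev2v5}, and it recovers Proposition \ref{v2v5v8comb}.
\item A $\gamma_8$ element $e_{a,j,k,b}$ is a single term with sum $1$.
\item In the same $(a,b)$-cluster with $e_{a,b}\notin\mathcal A_1$, the combination $w=w_{T_2}-c\,e_{a,j,k,b}$, with $c$ equal to the sum of the $T_2$-chain, satisfies $\sum\alpha_i=0$. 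The $\gamma_8$ term has no $N_w$ faces, so $w$ stays in $\Omega_3^{N,1}$.
\end{itemize}

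We must then check that $w$ is minimal in $\Omega_3^{N,1}\cap\Omega_3^{N,2}$. Any proper sub-combination either breaks an $N_w$-cancellation, contradicting minimality of the chain, or is a multiple of the chain alone or of $\gamma_8$ alone, and both fail Step 1. I also verify that the residual interior connecting edges still cancel, using the fact that the $\gamma_8$ face $e_{a,j,k}$ is triangular.

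\textbf{Step 3: the remaining families.} For $T_1$, $T_5$ and pure $\gamma_7$ trapezohedra, I show directly that the $\Omega_3^{N,1}$ generators already lie in $\Omega_3^{N,2}$.
\begin{itemize}
\item For $T_1$ and $T_5$ the edge $e_{a,b}$ is allowed.
\item For a trapezohedron the coefficients $+1,+1$ around the cycle must be adjusted. I expect to need the signed version $e_{a,j_t,k_t,b}-e_{a,j_{t+1},k_t,b}$, which has sum zero.
\item For $T_3$ the complementary edge is handled as in Proposition \ref{v2v5v8comb}.
\end{itemize}
Collecting the cases gives the list $T_1,\dots,T_5$. Spanning follows by splitting any element of the intersection into clusters, then into minimal $\Omega_3^{N,1}$ pieces, and then using the sum-zero condition to pair $T_2$-chains with $\gamma_8$ terms, or with other $T_2$-chains via differences.

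\textbf{Main obstacle.} I expect the hardest step to be the minimality and interior-edge bookkeeping in Step 2. This includes showing that differences of two $T_2$-chains in the same cluster are not new generator types, and that the $\partial^2$-contributions on $e_{a,k}$ and $e_{j,b}$ vanish. I would first try to reduce every interior connecting edge to the same $\sum\alpha_i$-type linear condition restricted to sub-clusters sharing the face.
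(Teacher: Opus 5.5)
\textbf{Verdict: genuine gap at Step 3.} Step 3's treatment of $T_3$ fails, and your own Step 1 criterion is what shows it.

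\textbf{What is correct.} Your Step 1 is right, and it is sharper than the paper's argument. The coefficient of a missing $e_{a,k}$ (resp.\ $e_{j,b}$) in $\partial^2 w$ is a fixed multiple of the coefficient of the non-allowed face $e_{a,k,b}$ (resp.\ $e_{a,j,b}$) in $\partial w$. So it already vanishes for elements of $\Omega_3^{N,1}$, and this is also what legitimately reduces the problem to one cluster. Hence an $(a,b)$-cluster $w\in\Omega_3^{N,1}$ lies in $\Omega_3^{N,2}$ if and only if $e_{a,b}\in\mathcal{A}_1$ or $\sum_i\alpha_i=0$. (There is a small slip: $[2]_\xi\,\xi^{3}=\xi^3+\xi^4$. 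The direct computation gives $\xi^2+\xi^3$, and only $\xi^2(1+\xi)\neq 0$ for $N\ge 3$ matters.) Your handling of the $T_2$--$\gamma_8$ combination, of $T_1$ and $T_5$, and of the signed trapezohedra is correct. It follows the paper's proof, with the coefficient bookkeeping the paper omits.

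\textbf{Where it fails.} For $T_3$ you defer to Proposition \ref{v2v5v8comb}, as the paper's own proof does. In a $T_3$ cluster the W-face forces $e_{a,b}\notin\mathcal{A}_1$. A chain $\gamma_3-(\gamma_7)^m-\gamma_6$ with $m$ odd has $m+2$ terms with forced alternating signs, so its coefficient sum is $\pm 1$ and it is not in $\Omega_3^{N,2}$.

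\textbf{Concrete counterexample.} Take the digraph with edges $a\to j$, $a\to k$, $j\to k$, $j\to k''$, $k\to k''$, $k\to b$, $k''\to b$, and let
\[ w=e_{a,j,k,b}-e_{a,j,k'',b}+e_{a,k,k'',b}. \]
This is exactly a $\gamma_3-\gamma_7-\gamma_6$ chain. Cancelling the non-allowed faces $e_{a,j,b}$ and $e_{a,k'',b}$ forces these coefficients up to scaling, so $w$ spans the $(a,b)$-cluster of $\Omega_3^{N,1}$. However, the shared W-face contributes $(\xi+\xi^2)e_{a,k,b}$ to $\partial w$. So $\partial^2 w$ contains $(\xi^2+\xi^3)e_{a,b}$ with $e_{a,b}\notin\mathcal{A}_1$. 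The other two clusters of this digraph are single $\gamma_8$ paths, so $\Omega_3^N=0$ here for $N\ge 3$, even though the digraph carries a $T_3$ generator of $\Omega_3^{N,1}$.

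\textbf{Why $T_3$ cannot be repaired as you planned.} A $\gamma_8$ term cannot rescue an odd $T_3$ chain. A $\gamma_8$ path $e_{a,j',k',b}$ gives two 2-paths $a\to j'\to b$ and $a\to k'\to b$, contradicting the uniqueness encoded by W. So odd $T_3$ chains enter $\Omega_3^N$ only through differences of two such chains in the same cluster.

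\textbf{A further overstatement.} Step 2 does not recover Proposition \ref{v2v5v8comb} in full. The even chains $\gamma_2-(\gamma_7)^m-\gamma_2$, including the self-connection $m=0$, have coefficient sum $0$ and lie in $\Omega_3^{N,2}$ on their own.

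\textbf{Consequence for the statement.} The conclusion, read as the paper's proof reads it (that $T_3$ generators of $\Omega_3^{N,1}$ already lie in $\Omega_3^{N,2}$), is false. No completion of Step 3 can prove it. What your Step 1 does establish is the following. In every cluster with $e_{a,b}\notin\mathcal{A}_1$, the $\Omega_3^N$ part is the sum-zero hyperplane of the $\Omega_3^{N,1}$ part. That hyperplane is spanned by even-length chains, trapezohedra, and differences of odd-length pieces (odd $T_2$ chains and $\gamma_8$ paths, or odd $T_3$ chains). The list $T_1,\dots,T_5$ survives only as a list of supporting type-families, with $T_3$ needing the same pairing mechanism as $T_2$.
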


\begin{proof}
Assume there exist two $(a,b)$-clusters, one supported on $(\gamma_2,\gamma_5,\gamma_7)$ and the other on $\gamma_8$. By construction, both clusters produce the same boundary term $e_{a,b}$ under $\partial^2$. Hence, by choosing appropriate coefficients, these contributions cancel in the linear combination, and the resulting element lies in $\Omega_3^{N,2}$. Since neither component alone belongs to $\Omega_3^{N,2}$ (by Proposition~\ref{v2v5v8comb}), their combination yields a minimal generator.

By Proposition~\ref{v2v5v8comb}, the types supported on $(\gamma_2,\gamma_5,\gamma_7)$ and $\gamma_8$ do not individually lie in $\Omega_3^{N,2}$, while the families
\[
T_1=(\gamma_1,\gamma_4,\gamma_7),\quad
T_3=(\gamma_3,\gamma_6,\gamma_7),\quad
T_4=(\gamma_7),\quad
T_5=(\gamma_9)
\]
already produce elements in $\Omega_3^{N,2}$.

Therefore, the only additional generators arise from combining $(\gamma_2,\gamma_5,\gamma_7)$ with $\gamma_8$, which yields the family
\[
T_2=(\gamma_2,\gamma_5,\gamma_7,\gamma_8).
\]
No other combinations produce new generators, since all other types either already lie in $\Omega_3^{N,2}$ or fail to cancel the boundary terms.

\end{proof}
\begin{corollary}
The space $\Omega_3^N$ is generated by those generators of $\Omega_3^{N,2}\cap \Omega_3^{N,1}$ in Theorem \ref{omega_3n2}. Equivalently,
\[
\Omega_3^N=\Omega_3^{N,1}\cap \Omega_3^{N,2}.
\]
\end{corollary}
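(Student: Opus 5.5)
The plan is to prove the equality $\Omega_3^N=\Omega_3^{N,1}\cap\Omega_3^{N,2}$ directly from the definition $\Omega_3^N=\bigcap_{1\le q\le N-1}\Omega_3^{N,q}$. The inclusion $\Omega_3^N\subseteq\Omega_3^{N,1}\cap\Omega_3^{N,2}$ is immediate. For $N=2$ the index $q=2$ does not occur, and the claim reduces to $\Omega_3^2=\Omega_3^{2,1}$. So it suffices to show that, for $N\ge3$, every $v\in\Omega_3^{N,1}\cap\Omega_3^{N,2}$ also lies in $\Omega_3^{N,q}$ for all $3\le q\le N-1$.

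This is a degree count. For $v\in\mathcal{A}_3$ and $q=3$, the element $\partial^3 v$ is a linear combination of $0$-paths. Every elementary $0$-path $e_i$ with $i\in V$ is allowed, so $\partial^3v\in\mathcal{A}_0$ automatically. For $q\ge4$ we have $\partial^q v\in\Lambda_{3-q}=0$, and $0\in\mathcal{A}_{3-q}$ trivially. This is exactly the remark after the definition of Mayer path homology: once $q\ge n$, the condition $\partial^q v\in\mathcal{A}_{n-q}$ imposes nothing. Hence $\Omega_3^{N,q}=\mathcal{A}_3$ for $q\ge3$, and the intersection collapses to the $q=1,2$ terms.

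For the generator statement, I will invoke Theorem~\ref{omega_3n2}. It lists the generators $T_1,\dots,T_5$ of $\Omega_3^{N,1}\cap\Omega_3^{N,2}$. Combined with the equality just proved, these same families generate $\Omega_3^N$. One should also check compatibility with the regular boundary convention, so that irregular terms are zero. The space $\mathcal{A}_0$ is spanned by all vertices, and irregular $0$-paths do not exist, so no extra condition arises at $q=3$.

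I expect no real obstacle in the equality itself. The only subtle point is making sure that $\mathcal{A}_0$ really is all of $\Lambda_0$ for the path complex of a digraph. This holds because every vertex is an allowed $0$-path, and I would state that explicitly. All of the substantive content, namely which clusters survive the $q=2$ condition, is inherited from Theorem~\ref{omega_3n2} and Proposition~\ref{v2v5v8comb}. The corollary therefore rests on those results being correct, and I would flag that dependence rather than reprove it.
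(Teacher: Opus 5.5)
Your proposal is correct and follows the paper's own argument. The paper likewise gets the equality from the observation that $\Omega_3^{N,q}=\mathcal{A}_3$ for $q\ge 3$, because $\partial^3 v$ lands in $\Lambda_0=\mathcal{A}_0$ and higher powers vanish, and then reads off the generators from Theorem~\ref{omega_3n2}; your explicit treatment of $N=2$ and of $\mathcal{A}_0=\Lambda_0$ just spells out what the paper leaves implicit.
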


The previous corollary is an observation of the fact $\Omega_3^{N,q}=\mathcal{A}_3$ for $q\geq 3$.

\subsection{The First Mayer Path Homology Group $H_1^{N,q}$}
Computation of standard path homology in degree $1$ is discussed in \cite{efficientH1} by Dey et al. It is shown that when $N=2$, the kernel of the boundary map $\partial_1:\Omega^2_1\to \Omega^2_0$ is generated by cycles in the underlying undirected graph of a given digraph.

More precisely, if $G_u$ denotes the underlying undirected graph of $G$, then a basis of the $1$-cycle space can be obtained from a spanning tree of $G_u$. Each non-tree edge determines a unique undirected cycle, and these cycles generate $\ker(\partial_1)$ via symmetric difference in mod 2 when coefficients are taken in from any field $\mathbb{K}$.

The following example demonstrates that this phenomenon does not extend to higher roots of unity. In particular, when $N\geq 3$, not every undirected cycle lies in the kernel of $\partial_1:\Omega_1^N\to \Omega_0^N$.

In Example \ref{nonisomorhiccomparison}, $L_1,L_2$
 and $L_3$ are three digraphs. For $N=2$, all undirected cycles forms kernel of $\partial_1$. However, when $N=3$, $Z_1^{3,1}(L_1)=0$ while others are nonzero. Since $Z_1^{N,q}=\Omega_1^{N}$ for $q\geq 2$, we will focus our investigation on $Z_1^{N,1}= Ker(\partial: \Omega_1^{N}\to \Omega_0^{N}) $.

Let $G$ be a digraph and let $G_u$ denote its underlying undirected graph. 

Given a cycle
\[
i_1,i_2,\dots,i_n,i_1
\]
in $G_u$ with distinct vertices, we associate to it a $1$-path in $\Omega_1(G)$ by assigning to each edge the orientation induced from $G$, namely
\[
v_j=
\begin{cases}
e_{i_j,i_{j+1}}, & \text{if } i_j\to i_{j+1}\text{ is an edge of }G,\\[4pt]
e_{i_{j+1},i_j}, & \text{otherwise.}
\end{cases}
\]
A linear combination
\[
\sum_{j=1}^n y_j v_j,\qquad y_j\in\mathbb{C},
\]
is called a weighted cycle associated to the undirected cycle.

\medskip

The following proposition characterizes when such a weighted cycle lies in the kernel.

\begin{prop}\label{admissiblecycle}
Let $\xi$ be a primitive $N$th root of unity, $N\ge 3$, and let
\[
i_1,i_2,\dots,i_n,i_1
\]
be a cycle in $G_u$. For each $j$, let $x_j=\xi$ be if the orientation of the edge $v_j$ is the same with $(i_j,i_{j+1})$ otherwise $x_j=1$, and let $x_j^*$ satisfy $x_j x_j^*=\xi$.

Define coefficients $y_1,\dots,y_n$ by choosing $y_1\neq 0$ and setting
\[
y_j = -\frac{x_{j-1}^*}{x_j} \, y_{j-1}, \qquad j=2,\dots,n.
\]

Then
\[
\sum_{j=1}^n y_j v_j \in \ker(\partial_1:\Omega_1^N\to\Omega_0^N)
\]
if and only if one of the following holds:
\begin{enumerate}
    \item $n$ is even and $u_1-u_2\equiv 0 \pmod N$,
    \item $n$ is odd, $N$ is even, and $u_1-u_2\equiv \frac{N}{2} \pmod N$,
\end{enumerate}
where $u_1$ and $u_2$ denote the number of indices $j$ for which
\[
\frac{x_j^*}{x_j}=\xi \quad \text{and} \quad \frac{x_j^*}{x_j}=\xi^{-1},
\]
respectively.
\end{prop}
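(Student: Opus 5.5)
The plan is to reduce the statement to a closing condition for a first-order linear recursion around the cycle. Everything rests on the explicit formula $\partial(e_{a,b})=e_b+\xi e_a$ coming from the Mayer boundary map.

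First, note that $\Omega_1^N=\mathcal{A}_1$, since $\partial$ of any edge lies in $\mathcal{A}_0$ and $\partial^q$ vanishes on $\Omega_1^N$ for $q\ge 2$. So the question is only whether $\partial\big(\sum_j y_jv_j\big)=0$.

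Next, I will compute this boundary vertex by vertex. Because the vertices $i_1,\dots,i_n$ are distinct, vertex $i_{j+1}$ (indices mod $n$) appears in exactly two terms, $\partial v_j$ and $\partial v_{j+1}$.
\begin{itemize}
\item Its coefficient in $\partial v_j$ is $1$ if $v_j=e_{i_j,i_{j+1}}$ and $\xi$ otherwise.
\item Its coefficient in $\partial v_{j+1}$ is $\xi$ if $v_{j+1}=e_{i_{j+1},i_{j+2}}$ and $1$ otherwise.
\end{itemize}
Thus each edge contributes the pair $\{1,\xi\}$ to its two endpoints, in an order fixed by its orientation. This is exactly what the quantities $x_j$ and $x_j^*$ with $x_jx_j^*=\xi$ encode: one endpoint gets $x_j$ and the other gets $x_j^*$. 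With the paper's convention, the vanishing of the coefficient of each shared vertex is the relation $y_j = -\frac{x_{j-1}^*}{x_j}\,y_{j-1}$ for $j=2,\dots,n$. I will check this identification carefully against the stated convention for $x_j$, since the bookkeeping of which endpoint receives $x_j$ versus $x_j^*$ is where sign or inversion errors would creep in.

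It follows that the recursion is forced: any element of the kernel supported on the cycle has $y_2,\dots,y_n$ determined by $y_1$. Conversely, the recursively defined combination kills every vertex except $i_1$. The only remaining condition is vanishing at $i_1$, which receives contributions from $v_n$ and $v_1$. That condition reads $y_1=-\frac{x_n^*}{x_1}y_n$, and after substituting the recursion it becomes
\[
(-1)^n\prod_{j=1}^n \frac{x_j^*}{x_j}=1 ,
\]
up to a harmless reindexing of the product.

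Each ratio $x_j^*/x_j$ lies in $\{\xi,\xi^{-1}\}$: it equals $\xi$ when $x_j=1$ and $\xi^{-1}$ when $x_j=\xi$. Hence the product is $\xi^{u_1-u_2}$, and the condition becomes $(-1)^n\xi^{u_1-u_2}=1$.
\begin{itemize}
\item If $n$ is even, this says $\xi^{u_1-u_2}=1$, i.e. $u_1\equiv u_2 \pmod N$, since $\xi$ is primitive.
\item If $n$ is odd, it says $\xi^{u_1-u_2}=-1$. Since $\xi$ is a primitive $N$th root of unity with $N\ge3$, $-1$ is a power of $\xi$ only when $N$ is even, and then $-1=\xi^{N/2}$. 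This gives $u_1-u_2\equiv N/2\pmod N$, while for odd $N$ no solution exists.
\end{itemize}

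I expect the main obstacle to be the reindexing in the closing step: making sure the telescoped product really runs over all $n$ edges exactly once, including the wrap-around edge $v_n$ joining $i_n$ to $i_1$, and that the stated definition of $x_j$ matches the endpoint assignments. I would settle this by verifying the formula on a directed $3$-cycle and on $L_1$ from Example~\ref{nonisomorhiccomparison} before writing the general argument.
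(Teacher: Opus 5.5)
Your proposal is correct and follows essentially the same route as the paper. The paper writes your vertex-by-vertex equations as $Ay=0$ for the cyclic bidiagonal matrix with $A_{j,j}=x_j$ and $A_{j+1,j}=x_j^*$, obtains the same recursion and closing condition $(-1)^n\prod_{j=1}^n x_j^*/x_j=1$, and splits into the same parity cases via $\xi^{u_1-u_2}$. The bookkeeping you flagged checks out: with $\partial(e_{a,b})=e_b+\xi e_a$, the edge $v_j$ gives coefficient $x_j$ to $i_j$ and $x_j^*$ to $i_{j+1}$, so the product runs over all $n$ edges exactly once and needs no reindexing.
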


\begin{proof}
Let $A$ be the matrix associated to the above cycle, defined by
\[
A_{i,i}=x_i,\qquad A_{i+1,i}=x_i^*,\qquad A_{1,n}=x_n^*,
\]
with indices modulo $n$, where $x_i\in\{1,\xi\},\ x_i x_i^*=\xi$, and all other entries of $A$ are zero.

Suppose there exists a nonzero vector $y=(y_1,\dots,y_n)^T\in\mathbb C^n$ such that $Ay=0$. Then
\[
x_1y_1+x_n^*y_n=0,\qquad x_{i-1}^*y_{i-1}+x_i y_i=0,\qquad i=2,\dots,n.
\]

From the latter equations we obtain the recursion
\[
y_i=-\frac{x_{i-1}^*}{x_i}y_{i-1},\ i=2,\dots,n
\Rightarrow y_n=(-1)^{n-1}\prod_{i=2}^n \frac{x_{i-1}^*}{x_i}\,y_1\]

Substituting into the first equation and using $y_1\neq 0$, we obtain
\[
0=x_1y_1+x_n^*y_n
=x_1y_1+(-1)^{n-1}x_n^*\prod_{i=2}^n\frac{x_{i-1}^*}{x_i}\,y_1,
\]
which simplifies to
\[
(-1)^n\prod_{i=1}^n\frac{x_i^*}{x_i}=1.
\]

Observe that for each $i$, $\frac{x_i^*}{x_i}\in\{\xi,\xi^{-1}\}$. Let $u_1$ denote the number of indices $i$ for which $\frac{x_i^*}{x_i}=\xi$, and let $u_2$ denote the number of indices $i$ for which $\frac{x_i^*}{x_i}=\xi^{-1}$.

Then
\[
u_1+u_2=n,\qquad \prod_{i=1}^n\frac{x_i^*}{x_i}=\xi^{u_1-u_2}.
\]
Hence the kernel condition becomes
\[
(-1)^n\xi^{u_1-u_2}=1
\]

If $n$ is even, this reduces to
\[
\xi^{u_1-u_2}=1 \Rightarrow u_1-u_2\equiv 0 \pmod N
\]

If $n$ is odd, we obtain
\[
\xi^{u_1-u_2}=-1.
\]
This is possible only when $N$ is even, in which case
\[
u_1-u_2\equiv \frac N2 \pmod N.
\]

Conversely, if these conditions hold, the recursion defines a nonzero vector $y$ satisfying $Ay=0$, and hence
\[
\sum_{j=1}^n y_j v_j \in \ker(\partial_1).
\]

For the final assertions, note that if $n$ is odd, then
\[
u_1-u_2 = n - 2u_2
\]
is also odd. Hence $\frac N2$ must be odd, which is equivalent to $N\equiv 2\pmod 4$. Moreover, since $|u_1-u_2|\le n$, the congruence
\[
u_1-u_2\equiv \frac N2\pmod N
\]
can hold only if $n\ge \frac N2$.

\end{proof}
The linear combinations arising from undirected cycles in $G_u$ that satisfy the conditions of the previous proposition are called admissible cycles. However, admissible cycles do not exhaust all elements of the kernel. In particular, certain linear combinations of non-admissible cycles may also lie in $\ker(\partial_1)$. For a given two non-admissible cycle define the merge of the cycles as the union of the vertex and edge set.

\begin{prop}\label{nonadmissiblecomb}

Let $I$ and $J$ be two non-admissible cycles in $G_u$ with nonempty intersection on vertex set. Then there exists a nonzero weighted element supported on the merge of $I$ and $J$ that lies in $\ker(\partial_1)$.

\end{prop}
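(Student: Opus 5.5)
The plan is to build, for each of the two cycles, a weighted $1$-path whose boundary vanishes at every vertex except one common vertex. A suitable linear combination of the two then cancels that last defect. Since $\Omega_1^N=\mathcal{A}_1$ (every edge is $\partial$-invariant because $\partial e_{i,j}\in\mathcal{A}_0$), it suffices to produce a nonzero element of $\mathcal{A}_1$, supported on the merge of $I$ and $J$, with zero boundary.

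\textbf{Step 1 (open chains with a single defect).} Fix a vertex $p$ in the intersection of the vertex sets of $I$ and $J$. Relabel $I$ as $p=i_1,i_2,\dots,i_n,i_1$ and use the notation $v_j,x_j,x_j^*$ of Proposition \ref{admissiblecycle}. Choose $y_1=1$ and define $y_2,\dots,y_n$ by the same recursion $y_j=-\frac{x_{j-1}^*}{x_j}y_{j-1}$, and set $w_I=\sum_j y_jv_j$. In the matrix $A$ from the proof of Proposition \ref{admissiblecycle}, rows $2,\dots,n$ of $Ay$ vanish by construction. Hence
\[
\partial w_I = r_I\, e_p,\qquad r_I = x_1 + x_n^*\,(-1)^{n-1}\prod_{i=2}^n\frac{x_{i-1}^*}{x_i}.
\]
The computation in that proof shows $r_I=0$ if and only if $(-1)^n\prod_i x_i^*/x_i=1$. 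This condition is invariant under cyclic relabeling, since it is a product over all edges of the cycle. Non-admissibility of $I$ therefore gives $r_I\neq 0$. Applying the same construction to $J$, again started at $p$, gives $w_J$ with $\partial w_J=r_Je_p$ and $r_J\neq 0$.

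\textbf{Step 2 (cancelling the defect).} Set $w=r_J\,w_I-r_I\,w_J$. Then $\partial w=0$, and $w$ is supported on the union of the edge sets, i.e.\ on the merge of $I$ and $J$. Every $y_j$ is nonzero, being a product of nonzero factors, so every edge of each cycle carries a nonzero coefficient in its own chain.

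\textbf{Step 3 (nonvanishing), the expected main obstacle.} Edges shared by $I$ and $J$ may cancel in $w$. Since $I\neq J$ are distinct cycles, and a cycle is determined by its edge set, there is an edge $\epsilon$ of $I$ that is not an edge of $J$. The coefficient of $\epsilon$ in $w$ is $r_J y_\epsilon$. Here $r_J\neq0$ by non-admissibility of $J$, and $y_\epsilon\neq0$ by Step 2, so $w\neq 0$. The one point needing care is that the orientation convention for $v_j$ is taken from $G$, so a shared edge appears as the same basis element $e_{a,b}$ in both chains; this is what makes the combination in Step 2 a genuine element of $\mathcal{A}_1$ rather than something requiring reorientation. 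It remains to verify that $r_I$ is computed consistently after relabeling $I$ to start at $p$.
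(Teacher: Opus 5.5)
Your proof is correct, and it takes a different route from the paper's.

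\textbf{The paper's route.} The paper assumes that $I$ and $J$ overlap in a single contiguous path $i_1=j_1,\dots,i_k=j_k$. It splits the merge into the three branches $P_1,P_2,P_3$ of a theta graph and propagates the recursion of Proposition~\ref{admissiblecycle} along each branch. This reduces everything to a $2\times 3$ system $A'y'=0$ at the two branch vertices. A nonzero $y'$ exists for dimensional reasons. Non-admissibility of $I$ is used only to show $\operatorname{rank}A'=2$, so the solution space is one-dimensional. Nonvanishing of $\omega$ then follows from $y'\neq 0$ together with edge-disjointness of the branches.

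\textbf{Your route.} You root both cycles at one common vertex $p$. The recursion leaves a single defect $r_I e_p$ with
\[
r_I = x_1\Bigl(1-(-1)^n\prod_i x_i^*/x_i\Bigr),
\]
which is nonzero exactly by non-admissibility. You then cancel the two defects against each other, and get nonvanishing from an edge lying in only one of the two cycles.

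\textbf{Comparison.} Your argument needs only one shared vertex. It therefore covers the configurations the paper's parametrization does not really handle: a single common vertex, where $P_1$ is empty, and several disjoint shared segments, where the merge is not a theta graph. It also avoids the determinant computation. What the paper's route gives in return is the extra fact that, in the theta case, the kernel element supported on the merge is unique up to scalar. Your construction gives existence only, which is all the proposition claims.

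\textbf{Minor points.}
\begin{itemize}
\item The check you leave open at the end of Step 3 is already settled by the factorization of $r_I$ above. Only $r_I\neq 0$ matters, and the product is invariant under rotating the starting vertex.
\item The orientation concern is unnecessary. $\partial w=0$ holds by linearity however shared edges are represented as basis elements.
\item For the edge $\epsilon$ of $I$ not in $J$, you need slightly more than ``a cycle is determined by its edge set''. You also need that no cycle's edge set properly contains another's. Alternatively, argue symmetrically, using coefficient $-r_I y_\epsilon$ if the edge you find belongs to $J$.
\end{itemize}
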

\begin{proof}
    Let $I$ and $J$ be two non-admissible cycles induced by vertex sequences $i_1,\dots,i_n$ and $j_1,\dots,j_m$, respectively, and suppose they share a nontrivial path $i_1 = j_1, \dots, i_k = j_k$.
 Define the paths
\[
P_1 = (i_1,\dots,i_k), \quad
P_2 = (i_k,\dots,i_n,i_1), \quad
P_3 = (j_k,\dots,j_m,j_1).
\]
    Let $A_i$ be the adjacency matrix for $P_i$ where $i=1,2,3$.
    \[[A_i]_{j,j}=(x_i)_j\quad [A_i]_{j+1,j}=(x_i)_j^* \quad j=1,\cdots ,k-1 \]
    where $(x_i)_j\in \{1,\xi\}$ and $(x_i)_j(x_i)_j^*=1$ for $\xi$ is the $N$th root of unity.
    
    Let $y_i=(y_{i,1},\cdots, y_{i,a_i})^T\in\mathbb{C}^{a_i}$ where $a_1=k-1$, $a_2=n-k+1$ and $a_3= m-k+1$.
    Let $A$ be the adjacency matrix of merge of $I$ and $J$. $Ay^T=0$ will yield the following equations where $y$ is stack of $y_i$
    \[y_{i,j-1}(x_i)_{j-1}^*+y_{i,j}(x_i)_j=0\quad i=1,2,3\quad j=2,\cdots, a_i\quad (1)\]
    \[y_{1,1}(x_1)_1+y_{2,a_2}(x_2)_{a_2}^*+y_{3,a_3}(x_3)_{a_3}^*=0\quad (2)\]
    \[y_{1,a_1}(x_1)_{a_1}^*+y_{2,1}(x_2)_{1}+y_{3,1}(x_3)_1=0 \quad (3)\]
    The recursive relation (1) implies
    \[y_{t,a_t}=(-1)^{a_t-1}\prod_{l=1}^{a_t-1}\dfrac{(x_t)_l^*}{(x_t)_{l+1}} y_{t,1}\quad t=1,2,3\]
    Therefore $(2)$ and $(3)$ can be rewritten as
    \[y_{1,1}(x_1)_1+y_{2,1}(-1)^{a_2-1}\prod_{l=1}^{a_2-1}\dfrac{(x_2)_l^*}{(x_2)_{l+1}} (x_2)_{a_2}^*+y_{3,1}(-1)^{a_3-1}\prod_{l=1}^{a_3-1}\dfrac{(x_3)_l^*}{(x_3)_{l+1}} (x_3)_{a_3}^*=0\]
    \[y_{1,1}(-1)^{a_1-1}\prod_{l=1}^{a_1-1}\dfrac{(x_1)_l^*}{(x_1)_{l+1}} (x_1)_{a_1}^*+y_{2,1}(x_2)_{1}+y_{3,1}(x_3)_1=0 \]
   Substituting into (2) and (3) gives a system
\[
A' y' = 0,
\quad
y' = (y_{1,1}, y_{2,1}, y_{3,1})^T,
\]
where

    \[A'= \begin{bmatrix}
        (x_1)_1 & (-1)^{a_2-1}\prod_{l=1}^{a_2-1}\dfrac{(x_2)_l^*}{(x_2)_{l+1}} (x_2)_{a_2}^* &(-1)^{a_3-1}\prod_{l=1}^{a_3-1}\dfrac{(x_3)_l^*}{(x_3)_{l+1}} (x_3)_m^*\\
        (-1)^{a_1-1}\prod_{l=1}^{a_1-1}\dfrac{(x_1)_l^*}{(x_1)_{l+1}} (x_1)_{a_1}^* & (x_2)_{1} &(x_3)_1
    \end{bmatrix}\]
     Consider the $2\times2$ minor formed by the first two columns. Its determinant is
\[
(x_1)_1 (x_2)_1
-
(-1)^{a_1+a_2}
\prod_{l=1}^{a_1}\frac{(x_1)_l^*}{(x_1)_l}
\prod_{l=1}^{a_2}\frac{(x_2)_l^*}{(x_2)_l}.
\]

Since $I$ and $J$ are non-admissible, we have
\[
(-1)^{a_1+a_2}
\prod_{l=1}^{a_1}\frac{(x_1)_l^*}{(x_1)_l}
\prod_{l=1}^{a_2}\frac{(x_2)_l^*}{(x_2)_l}
\neq 1,
\]
so the determinant is nonzero. Hence $\operatorname{rank}(A') = 2$.

Since $A'$ is a $2\times 3$ matrix, we obtain
\[
\dim \ker(A') = 3 - 2 = 1,
\]
so there exists a nontrivial solution $y' \neq 0$.

Now define coefficients along each path by
\[
y_{t,r}
=
(-1)^{r-1}
\left(\prod_{\ell=1}^{r-1}\frac{(x_t)_\ell^*}{(x_t)_{\ell+1}}\right)
y_{t,1},
\quad r=1,\dots,a_t,\quad t=1,2,3.
\]

Let $e_{t,r}$ denote the oriented edge in position $r$ of the path $P_t$. Define the weighted element
\[
\omega
=
\sum_{t=1}^3 \sum_{r=1}^{a_t} y_{t,r}\, e_{t,r}.
\]

Then equation (1) ensures cancellation at all interior vertices of each path, while $A'y'=0$ ensures cancellation at the shared vertices. Hence
\[
\partial_1(\omega)=0.
\]

Since $y' \neq 0$, we have $\omega \neq 0$. Therefore $\omega$ is a nonzero weighted element supported on $I \cup J$ lying in $\ker(\partial_1)$.
\end{proof}
The following theorem generalizes the spanning tree–based construction of the kernel of $\partial_1$ for $N=2$, as described in \cite{efficientH1} by Dey et al., to the case $N\geq 3$, where additional generators beyond single cycles are required.
\begin{theorem}
Let $G$ be a digraph and $G_u$ its underlying undirected graph. Then the kernel of $\partial_1:\Omega_1^N \to \Omega_0^N$ for $N\geq 3$ is generated by:
\begin{enumerate}
    \item weighted admissible cycles, and
    \item weighted elements supported on merges of pairs of non-admissible cycles with nonempty intersection.
\end{enumerate}
\end{theorem}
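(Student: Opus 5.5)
Note first that $\Omega_1^N$ is spanned by the edges of $G$ and $\Omega_0^N$ by the vertices. The map $\partial_1$ sends $e_{a,b}$ to $e_b+\xi e_a$. So $\ker(\partial_1)$ is the null space of the $|V|\times|E|$ matrix $B_1$, with columns of this form, exactly as in Example \ref{nonisomorhiccomparison}. It suffices to work on each connected component of $G_u$ separately, since the matrix is block diagonal over components.

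The plan is a dimension count combined with a support argument, mirroring the spanning-tree construction for $N=2$ in \cite{efficientH1}. Fix a connected component and a spanning tree $\mathcal T$ of $G_u$. Each non-tree edge $f$ determines a fundamental cycle $C_f$. By Proposition \ref{admissiblecycle}, the recursion along $C_f$ produces a weighted cycle in the kernel exactly when $C_f$ is admissible.

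The first step is a support lemma: every $\omega\in\ker(\partial_1)$ has support that is a union of cycles of $G_u$. The reason is that a leaf vertex of the support carries a single edge, and that edge's coefficient must then vanish. Consequently the support contains no vertex of degree $1$, and every connected piece of the support is either a single cycle or contains at least two cycles sharing a vertex. On a single cycle, the recursion in the proof of Proposition \ref{admissiblecycle} shows that the coefficients are determined up to scaling. The cycle then contributes to the kernel only if it is admissible, which gives generator type (1).

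The second step is to reduce a general $\omega$ along its support. I would choose a minimal-support kernel element $\omega$. If its support is a single cycle, it is a weighted admissible cycle. Otherwise the support contains a theta-subgraph, that is, two cycles $I,J$ meeting along a path. If either of $I,J$ is admissible, I subtract a suitable multiple of the corresponding weighted admissible cycle to kill one edge coefficient, which strictly shrinks the support. If both are non-admissible, Proposition \ref{nonadmissiblecomb} supplies a kernel element supported on $I\cup J$. The computation in its proof shows that the solution space on the merge is one-dimensional, since $A'$ has rank $2$. Subtracting a multiple of that element again kills an edge of $\omega$. Induction on the size of the support then expresses $\omega$ through generators of types (1) and (2).

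The main obstacle is justifying that the subtraction step actually reduces the support. The generator may carry edges outside the support of $\omega$; this does not happen when $I\cup J$ is contained in the support, but it must be checked. One also has to ensure that cancelling one edge does not introduce new edges. To handle this, I would work only with cycles lying inside the support of $\omega$. Such cycles exist because the support has minimum degree $2$, so it contains a theta-subgraph whenever it is not a single cycle. With that choice the support cannot grow and strictly loses an edge. I would finish by noting that when $N\ge3$ the kernel dimension can be computed as $|E|-\operatorname{rank}B_1$, and I would sanity-check this count against $L_1,L_2,L_3$ in Example \ref{nonisomorhiccomparison}.
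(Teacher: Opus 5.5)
Your proposal has a genuine gap, and it cannot be closed, because the statement itself is false.

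\textbf{Where the argument fails.} The failing step is your claim that a kernel support of minimum degree $2$ which is not a single cycle must contain a theta-subgraph, or at least two cycles sharing a vertex. A connected graph of minimum degree $2$ that is not a cycle contains one of three configurations: a theta, a tight handcuff (two cycles meeting in one vertex), or a \emph{loose handcuff} (two vertex-disjoint cycles joined by a path). The loose handcuff is neither of the statement's generator types, and it really occurs in the kernel.

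\textbf{Counterexample.} Take $N=3$ and let $G$ be the directed triangles $1\to2\to3\to1$ and $4\to5\to6\to4$ joined by the edge $3\to4$. Using $\partial e_{a,b}=e_b+\xi e_a$, a vertex-by-vertex check shows
\[
\omega=e_{1,2}-\xi^2e_{2,3}-\xi e_{3,1}+2\xi e_{3,4}-e_{4,5}+\xi^2e_{5,6}-\xi e_{6,4}\in\ker\partial_1 .
\]
The only cycles of $G_u$ are the two disjoint triangles, and both are non-admissible. This follows from Proposition \ref{admissiblecycle}, since $n$ and $N$ are both odd; directly, each triangle's $3\times 3$ matrix has determinant $\xi^3+1=2$. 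So the proposed generating set is empty while $\ker\partial_1\neq 0$. Your induction stalls on $\operatorname{supp}\omega$, and no repair of the argument can help.

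The paper's spanning-tree proof has the same hole. For a spanning tree containing $3\to4$, the fundamental cycles are the two disjoint triangles. Its step about merging intersecting non-admissible cycles then has nothing to act on.

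\textbf{What your approach does prove.} Apart from this, your support-reduction scheme is sound and cleaner than the paper's argument. The paper simply asserts, without justification, that $v'$ can be written through merges. Your reduction runs as follows.
Every vertex of $\operatorname{supp}\omega$ has degree at least $2$ there.
A connected component of the support that is a single cycle must be an admissible cycle.
Otherwise the component has cyclomatic number at least $2$, so it contains a bicycle $B$ (a theta, or a tight or loose handcuff) with $|E(B)|=|V(B)|+1$.
Such a $B$ carries a nonzero kernel element supported inside $\operatorname{supp}\omega$, and subtracting a multiple of it kills an edge.

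You should use this trichotomy rather than only thetas. It also covers tight handcuffs, where Proposition \ref{nonadmissiblecomb} as written assumes a nontrivial shared path. Run this way, your argument proves a corrected statement: $\ker\partial_1$ is spanned by weighted admissible cycles together with kernel elements supported on thetas and on tight and loose handcuffs of non-admissible cycles. These are the circuits of the frame matroid of the gain graph with gain $-\xi$ on every edge. The theorem needs loose handcuffs added as a third generator type.
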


\begin{proof}
Let $v \in \ker(\partial_1)$ and let $T$ be a spanning tree of $G_u$. Each non-tree edge $e \in G \setminus T$ determines a fundamental cycle $C_e$. Let
\[
C = \{C_e : e \in G \setminus T\}
\]
be the set of all fundamental cycles.

Partition $C$ into admissible cycles $C_a$ and non-admissible cycles $C_n$.
For each admissible cycle $C_e \in C_a$, by Proposition~\ref{admissiblecycle}, there exists a weighted cycle
\[
c_e = \sum y_i v_i \in \ker(\partial_1)
\]
supported on $C_e$.

If $v$ contains the edge $e$, define $v'=v - c_e$ therefore $v'\in ker(\partial_1)$. Repeating this process, we obtain a vector $v'$ in the kernel that is free from admissible cycles.

If $v'$ is supported only on edges of the spanning tree $T$, then $v'$ cannot lie in the kernel, since the induced subgraph contains a vertex of degree $1$. Hence $v'$ must involve edges corresponding to non-admissible cycles.

Let
\[
C_n = \{C_{e_1}, \dots, C_{e_t}\}
\]
be the non-admissible cycles appearing in $v'$.

For any two cycles $I, J \in C_n$ with nonempty intersection (i.e., sharing at least one vertex), consider their merge $I \cup J$. By Proposition~\ref{nonadmissiblecomb}, there exists a nonzero weighted element supported on $I \cup J$ that lies in $\ker(\partial_1)$. Using such elements, we can express $v'$ as a linear combination of kernel elements supported on merges of non-admissible cycles.

Each fundamental cycle $C_e$ contains a unique non-tree edge $e$. For admissible cycles, this immediately implies linear independence. 
For the non-admissible part, we select generators supported on merges so that each generator contains a non-tree edge not appearing in any previously chosen generator. 
This is possible because each merge involves at least one fundamental cycle with a unique non-tree edge.
Therefore, any linear relation among the generators forces all coefficients to vanish, and the generating set is linearly independent.

We conclude that $\ker(\partial_1)$ is generated by admissible cycles and kernel elements supported on merges of non-admissible cycles.
\end{proof}

The results of this chapter demonstrate that the structure of the first Mayer path homology group $H_1^{N,q}$ for $N\geq 3$ differs fundamentally from the classical case $N=2$. While undirected cycles generate the kernel in the latter setting, we showed that for higher roots of unity this is no longer sufficient. Instead, only certain weighted cycles, characterized by admissibility conditions, lie in $\ker(\partial_1)$, and additional generators arise from weighted elements supported on merges of non-admissible cycles. In particular, the dimension of $Z_1^{N,q}$ is bounded above by $|E|-|V|+c$ where $c$ is the number connected components of digraph.

\section{Conclusion}
In this work, we introduced Mayer path homology, a new homological framework that unifies Mayer homology and path homology to study directed graphs in a canonical and combinatorially intrinsic manner. By incorporating an $N$-nilpotent boundary operator into the setting of path complexes, this theory overcomes the triangulation dependence inherent in classical Mayer homology while retaining sensitivity to directionality and higher-order interactions. We provided a detailed structural analysis of $\partial$-invariant paths, including explicit classifications of generators in low dimensions, and demonstrated how Mayer path homology refines standard path homology by distinguishing directed network motifs that are otherwise indistinguishable. In particular, our characterization of $N$-chain path complexes and the first Mayer path homology group reveals new algebraic and combinatorial phenomena arising from higher-order differentials. These results establish a solid foundation for further theoretical development, such as persistent Mayer path homology and multiscale Mayer path spectral theory. The potential applications of the present work include topological data analysis, directed networks, and mathematical artificial intelligence. 

\section*{Acknowledgments}
 This work was supported in part by NIH grant   R35GM148196, MSU Research Foundation, The University of Georgia, and Georgia Research Alliance.  

\section*{AI Usage Statement}
An AI-based language model was used solely for improving the clarity, grammar, and presentation of the manuscript. The author takes full responsibility for all mathematical results, proofs, and interpretations.
\bibliographystyle{plain}  
\bibliography{references}  
\end{document}